\providecommand{\U}[1]{\protect\rule{.1in}{.1in}}
\newtheorem{theorem}{Theorem}
\newtheorem{lemma}[theorem]{Lemma}
\newtheorem{proposition}[theorem]{Proposition}
\newtheorem{remark}[theorem]{Remark}
\newenvironment{proof}[1][Proof]{\noindent\textbf{#1.} }{\ \rule{0.5em}{0.5em}}
\begin{document}

\title{A note on the topological synchronization of unimodal maps}
\author{Michele Gianfelice\\Dipartimento di Matematica e Informatica\\Universit\`{a} della Calabria\\Campus di Arcavacata\\Ponte P. Bucci - cubo 30B\\I-87036 Arcavacata di Rende (CS) Italy\\gianfelice@mat.unical.it }
\date{}
\maketitle

\begin{abstract}
In this note we complete the analysis carried on in \cite{CGSV} about the
topological synchronisation of unimodal maps of the interval coupled in a
master-slave configuration, by answering to the questions raised in that
paper. Namely, we compute the weak limits of the invariant measure of the
coupled system as the coupling strength $k\in\left(  0,1\right)  $ tends to
$0$ and to $1$ and discuss the uniqueness of the invariant measure of its
random dynamical system counterpart, proving that the convergence of the
associated Markov chain to its unique stationary measure is geometric.

\end{abstract}
%\tableofcontents

\bigskip

\begin{description}
\item[AMS\ subject classification:] {\small 37A10, 60J10.}

\item[Keywords and phrases:] {\small coupled dynamical systems, unimodal maps,
master-slave system, Markov chains, Random Dynamical Systems, Topological
synchronisation.}
\end{description}

\bigskip

\section{Introduction}

In \cite{CGSV} it has been studied the asymptotic behaviour of the
trajectories of the components of a dynamical system realized by coupling two
unimodal map of the interval in a master/slave configuration. Namely, denoting
by $I$ the interval $[-1,1],$%
\begin{equation}
I^{2}\ni\left(  x_{n},y_{n}^{\left(  k\right)  }\right)  \longmapsto\left\{
\begin{array}
[c]{l}%
x_{n+1}=T_{1}\left(  x_{n}\right) \\
y_{n+1}^{\left(  k\right)  }=(1-k)T_{2}\left(  y_{n}^{\left(  k\right)
}\right)  +kT_{1}\left(  x_{n}\right)
\end{array}
\right.  \in I^{2}\ , \label{msd}%
\end{equation}
where $k\in\left(  0,1\right)  .$

A system of this type were introduced in the physics literature \cite{Letal}%
\ as a simple example of a dynamical system undergoing to the so called
\emph{topological synchronisation}, which refers to the numerically observed
phenomenon, in asymmetrically coupled dynamical systems, of the slave
component attractor to become very similar to the master's one for
sufficiently large values of the coupling. In particular, to gauge the
emergence of this phenomenon the authors of \cite{Letal} suggested to compare
the spectrum of the generalized dimensions of the empirical measures of the
slave component $\left(  D_{q}\left(  \nu_{n}^{\left(  k\right)  }\right)
,\ q\in\mathbb{R}\right)  ,$ where $\nu_{n}^{\left(  k\right)  }:=\frac{1}%
{n}\sum_{i=0}^{n-1}\delta_{y_{i}^{\left(  k\right)  }}$ and
\begin{equation}
D_{q}\left(  \nu_{n}^{\left(  k\right)  }\right)  :=\left\{
\begin{array}
[c]{cc}%
\frac{1}{q-1}\lim_{r\downarrow0}\frac{\log\int_{\Sigma}\nu_{n}^{\left(
k\right)  }\left(  dx\right)  \left[  \nu_{n}^{\left(  k\right)  }\left(
B_{r}\left(  x\right)  \right)  \right]  ^{q-1}}{\log r} & q\neq1\\
\lim_{r\downarrow0}\frac{\int_{\Sigma}\nu_{n}^{\left(  k\right)  }\left(
dx\right)  \log\nu_{n}^{\left(  k\right)  }\left(  B_{r}\left(  x\right)
\right)  }{\log r} & q=1
\end{array}
\right.  \ ,
\end{equation}
provided the limits exist, with that of the master one $\mu_{n}:=\frac{1}%
{n}\sum_{i=0}^{n-1}\delta_{x_{i}},$ for very large values of $n$ and for
Lebesgue almost all initial conditions $\left(  x_{0},y_{0}\right)  \in
I^{2},$ as the coupling increases. In particular, in the case of a system
given in (\ref{msd}) where the coupled maps belong to the logistic family,
they observed numerically that, for very large values of $n,$ the difference
$\Delta D_{q}^{\left(  k\right)  }:=\left\vert D_{q}\left(  \nu_{n}^{\left(
k\right)  }\right)  -D_{q}\left(  \mu_{n}\right)  \right\vert $ converges to
zero as the coupling parameter $k$ approaches to $1$ first for negative values
of $q.$ This lead them to deduce that, assuming $n$ sufficiently large such
that $\rho_{n}^{\left(  k\right)  }:=\frac{1}{n}\sum_{i=0}^{n-1}%
\delta_{\left(  x_{n},y_{i}^{\left(  k\right)  }\right)  }$ is close to one of
its possible weak limits with larger basin of attraction $\rho^{\left(
k\right)  }$ (the physical measure if it exists), as the coupling increases,
the synchronisation of the trajectories of slave component with those of the
master one would start first in the regions of the configurations space of the
coupled system where the invariant measure $\rho^{\left(  k\right)  }$ has low density.

In \cite{CGSV} it has been shown that the topological synchronization
phenomenon can easily be interpreted as the emergence of an invariant set in a
neighbourhood of the diagonal of $I^{2}$ to which the trajectories of the
system components converge in the strong coupling limit. Moreover, it has been
shown that there is a large set of maps, in the family of those of quadratic
type used in \cite{Letal}, such that the physical measure $\mu$ of $T_{1}$ has
density bounded away from zero and consequently $D_{q}\left(  \mu\right)  $
equal to $1$ for all negative values of $q.$ Therefore, the numerical evidence
that $\Delta D_{q}^{\left(  k\right)  }$ can be equal to zero for such values
of the parameter $q$ even for values of the coupling large but bounded away
from $1$ can be explained by the fact that for these values of $k$ also the
weak limit $\nu^{\left(  k\right)  }$ of $\nu_{n}^{\left(  k\right)  }$ has a
density bounded away from zero and so $D_{q}\left(  \nu^{\left(  k\right)
}\right)  =1$ for $q<0.$

Furthermore, in the case, $T_{1}$ is exponentially mixing w.r.t. its invariant
measure $\mu,$ the asymptotic properties of the evolution of the slave
component of (\ref{msd}) has been compared with those of the Markov chain (MC)
$\left\{  Y_{n}^{\left(  k\right)  }\right\}  _{n\geq0}$ such that
$Y_{n+1}^{\left(  k\right)  }=\left(  1-k\right)  T_{2}\left(  Y_{n}^{\left(
k\right)  }\right)  +k\omega_{n},$ where $\left\{  \omega_{n}\right\}
_{n\geq1}$ is a sequence of i.i.d. random variables sampled according to
$\mu.$ In particular, if $\mu$ is a.c. w.r.t. the Lebesgue measure, which in
the following we will denote by $\lambda,$ it has been shown that, for any
$k\in\left[  0,1\right]  ,$ the stationary measures of $\left\{
Y_{n}^{\left(  k\right)  }\right\}  _{n\geq0}$ are a.c. w.r.t. $\lambda$ and
there are at most finitely many ergodic probability measures. It has also been
shown that any stationary measure weakly converges to $\mu$ as $k$ tends to
$1$ and, as $k$ tends to $0,$ to a probability measure invariant under the
evolution defined by $T_{2}.$

In this note we complete the analysis just presented on the coupled dynamical
system (\ref{msd}) answering to the following questions that were left open
\cite{CGSV}.

First, since $T_{1}$ and $T_{2}$ are continuous maps, for any $k\in\left(
0,1\right)  ,$ we can construct an invariant measure $\rho^{\left(  k\right)
}$ for the skew-product system defined by (\ref{msd}). Considering the random
vector $\left(  \eta_{1},\eta_{2}\right)  $ on $I^{2}$ distributed according
to $\rho^{\left(  k\right)  },$ where $\mu$ is the marginal distribution of
the first component representing the master system, and denoting by
$\rho^{\left(  k\right)  }\left(  \cdot|\eta_{1}\right)  $ the conditional
distribution of the r.v. $\eta_{2},$ representing the slave component, w.r.t.
$\eta_{1},$ a natural question arose; denoting by $\rho^{\left(  k\right)
}\left(  \cdot|x\right)  $ the regular version of $\rho^{\left(  k\right)
}\left(  \cdot|\eta_{1}\right)  ,$

\textbf{Question I}: Does $\left\{  \rho^{(k)}\left(  \cdot|x\right)
,\ k\in\left[  0,1\right]  \right\}  $ converges weakly, when $k\rightarrow1,$
to $\mu$ for $\mu-a.e.\ x?$

Second, assuming that the invariant measure $\mu$ of $T_{1}$ has density $h$
and considering the MC $\left\{  Y_{n}^{\left(  k\right)  }\right\}  _{n\geq
0}$ described above one may wonder about the uniqueness of the stationary
measure. More precisely, for any $k\in\left[  0,1\right]  ,$ denoting by
$p_{k}\left(  y,z\right)  $ the transition probability kernel of the MC and by
$L_{k}$ the associated Kolmogorov forward operator
\begin{equation}
L^{1}\left(  \lambda\right)  \ni g\longmapsto\left(  L_{k}g\right)
(z):=\int_{I}dyg(y)p_{k}(y,z)\in L^{1}\left(  \lambda\right)  \ , \label{dfs2}%
\end{equation}
one can ask:

\textbf{Question II}: Is there a unique solution to $L_{k}g^{\left(  k\right)
}=g^{\left(  k\right)  },$ for any $k\in\left(  0,1\right)  ?$

Since numerical simulations (\cite{CGSV} fig. 4A) show that if $\mu$ is a.c.
w.r.t. the Lebesgue measure then, for sufficiently large values of $k,$ the
weak limit as $n\uparrow\infty$ of the empirical measure $\nu_{n}^{\left(
k\right)  }$ is also a.c. w.r.t. the Lebesgue measure, it is reasonable to ask
if, in the case Question II has a positive answer at least for $k$
sufficiently large, a stronger notion of convergence to $\mu$ than the weak
convergence as $k$ tends to $1$ of $\lim\sup_{n\rightarrow\infty}\nu
_{n}^{\left(  k\right)  }$ proven in \cite{CGSV} Proposition 3.1 is in force.
More precisely,

\textbf{Question III}: Assume that for any $k\in\lbrack0,1]$ the solution of
(\ref{dfs2}) is unique. Denoting by $\Delta_{n}^{(k)}$ the total variation
distance between $\nu_{n}^{(k)}$ and $\bar{\nu}_{n}^{(k)}:=\frac{1}{n}%
\sum_{i=0}^{n-1}\delta_{Y_{i}^{\left(  k\right)  }},$ do we have
\begin{equation}
\limsup_{k\rightarrow1}\limsup_{n\rightarrow\infty}\Delta_{n}^{(k)}=0\ ,
\label{q3}%
\end{equation}
for Lebesgue almost all choices of the initial data $x_{0},y_{0}$ and
$\mathbb{P}$-almost all $\omega?$

In fact, for any $n\geq1,$ whenever the intersection of the sets $\left\{
Y_{0}^{\left(  k\right)  },..,Y_{n-1}^{\left(  k\right)  }\right\}  $ and
$\left\{  y_{0}^{\left(  k\right)  },..,y_{n-1}^{\left(  k\right)  }\right\}
$ is empty $\Delta_{n}^{\left(  k\right)  }=1,\forall k\in\left[  0,1\right]
.$ Therefore, a more correct version of the statement in Question III would be
that where (\ref{q3}) is replaced by
\begin{equation}
\limsup_{k\rightarrow1}\left[  \limsup_{n\rightarrow\infty}\left(  \left\Vert
\nu_{n}^{(k)}-\mu_{k}^{\ast}\right\Vert _{TV}+\left\Vert \bar{\nu}_{n}%
^{(k)}-g^{\left(  k\right)  }\lambda\right\Vert _{TV}\right)  +\left\Vert
\mu_{k}^{\ast}-g^{\left(  k\right)  }\lambda\right\Vert _{TV}\right]  =0\ ,
\label{q3'}%
\end{equation}
where $\left\Vert \cdot\right\Vert _{TV}$ denotes the total variation norm,
$\mu_{k}^{\ast}$ is any weak limit of $\left\{  \nu_{n}^{(k)}\right\}
_{n\geq0}$ and $g^{(k)}$ is the density of the unique stationary measure of
$\left\{  Y_{n}^{\left(  k\right)  }\right\}  _{n\geq0}.$

A positive answer to Question III would help to explain the convergence of the
sequence $\left\{  \tilde{\lambda}_{T_{2}}^{\left(  n\right)  }\left(
k\right)  \right\}  _{n\geq1},$ where, $\forall n\geq1,\tilde{\lambda}_{T_{2}%
}^{\left(  n\right)  }\left(  k\right)  :=\frac{1}{n}\sum_{i=0}^{n-1}%
\log\left\vert T_{2}^{\prime}\left(  y_{i}\left(  k,x_{0},y_{0}\right)
\right)  \right\vert ,$ which numerical simulations for
\begin{equation}
I\ni y\longmapsto T_{2}\left(  y\right)  :=c_{2}\left(  1-2y^{2}\right)  \in
I\;,\;c_{2}\in\left(  0,1\right)  \label{T_2}%
\end{equation}
show to hold in the limit as $k\uparrow1$ (\cite{CGSV} fig. 5B), although the
function $\log\left\vert T_{2}^{\prime}\right\vert $ is not bounded on $I$ yet
integrable w.r.t. the Lebesgue measure. A possible explaination of the
integrability of $\log\left\vert T_{2}^{\prime}\right\vert $ w.r.t. a weak
limit $\mu_{k}^{\ast}$ of $\left\{  \nu_{n}^{(k)}\right\}  _{n\geq0},$ for
values of $k$ sufficiently close to $1$ (see Proposition 3.1 in \cite{CGSV}),
could rely on the fact that $\mu_{k}^{\ast}$ has density and $\log\left\vert
T_{2}^{\prime}\right\vert \frac{d\mu_{k}^{\ast}}{d\lambda}$ is $\lambda
$-integrable. Here we will prove that Question III has a negative answer,
which leaves the convergence of $\left\{  \tilde{\lambda}_{T_{2}}^{\left(
n\right)  }\left(  k\right)  \right\}  _{n\geq1}$ as an open problem.

In the next section we will first answer to the second question in the case
the slave map $T_{2}$ is of the form considered in \cite{Letal} and
\cite{CGSV}, then turn to the answer of Question I and deduce from this the
answer in the negative to the third question.

\section{Results}

\subsection{On the uniqueness of the stationary measure of the MC analogue to
the slave system}

Let us assume that $\mu,$ the invariant measure of the master map $T_{1},$ has
density $h$ whose support is strictly contained in $I.$ Namely $T_{1}$ can be
chosen to satisfy the hypothesis of Proposition 2.7 in \cite{BS} (see
Proposition 5.1 in \cite{CGSV}), and consider the probability space $\left(
\Omega,\mathcal{F},\mathbb{P}\right)  $ such that $\Omega:=I^{\mathbb{N}%
},\mathcal{F}$ is the $\sigma$-algebra generated by the cylinder sets
\begin{equation}
\{\omega\in\Omega:(\omega_{1},..,\omega_{n})\in B\},\ n\in\mathbb{N}%
,\ B\in\mathcal{B}(I^{n})\ ,
\end{equation}
where $\mathcal{B}(I^{n})$ is the Borel $\sigma$-field on $I^{n},$ and
$\mathbb{P}:=\mu^{\otimes\mathbb{N}}.$

In \cite{CGSV}, for any $k\in\left(  0,1\right)  ,$ we considered the Random
Dynamical System (RDS) defined by the skew-product
\begin{equation}
\Omega\times I\ni\left(  \omega,y\right)  \longmapsto\Psi_{k}\left(
\omega,y\right)  :=(S\omega,F_{\pi\omega}^{(k)}(y))\in\Omega\times I\ ,
\label{RDS}%
\end{equation}
where $\pi:\Omega\rightarrow I$ is such that $\pi\omega=\omega_{1}%
,S:\Omega\rightarrow\Omega$ is the left shift, so that $\forall n\geq
1,\omega_{n}=\pi S^{n}\omega,$ and
\begin{equation}
I\ni y\longmapsto F_{\pi\omega}^{(k)}(y):=(1-k)T_{2}(y)+k\omega_{1}\in I\ .
\label{Fk}%
\end{equation}
Denoting by $F_{j}^{\left(  k\right)  }:=F_{\pi S^{j}\omega}^{(k)},j\geq0,$
the sequence $\left\{  Y_{n}^{\left(  k\right)  }\right\}  _{n\geq0}$ such
that $Y_{0}^{\left(  k\right)  }$ is a r.v. and, for any $n\geq0,Y_{n+1}%
^{\left(  k\right)  }=F_{n}^{\left(  k\right)  }(Y_{n}^{\left(  k\right)  }),$
is a homogeneous MC (see e.g. \cite{Ki} or \cite{Ar}) whose transition
probability kernel, computed in \cite{CGSV} section 6.2, is
\begin{equation}
p_{k}(y,z)dz=\frac{1}{k}h\left(  \frac{z-(1-k)T_{2}(y)}{k}\right)
\mathbf{1}_{[-1,1]}\left(  \frac{z-(1-k)T_{2}(y)}{k}\right)  dz\ . \label{p_k}%
\end{equation}
As already remarked in \cite{CGSV} (section 6.2, footnote 10), if $T_{1}$
mixes exponentially w.r.t. $\mu,$ the asymptotic statistical properties of the
evolution of the slave component of (\ref{msd}) can be compared with those of
the MC $\left\{  Y_{n}^{\left(  k\right)  }\right\}  _{n\geq0}.$

From Proposition 6.1 in \cite{CGSV} there are at most finitely many stationary
measures of the MC $\left\{  Y_{n}^{\left(  k\right)  }\right\}  _{n\geq0}$
and their densities are fixed points of (\ref{dfs2}). Here we prove that in
fact there is just one, which provide a positive answer to Question II. The
proof rely on the existence of a Lyapunov function for the MC $\left\{
Y_{n}^{\left(  k\right)  }\right\}  _{n\geq0},$ for any $k\in\left(
0,1\right)  ,$ that is a measurable function $V_{k}:\mathbb{R}\longrightarrow
\mathbb{R}_{+}$ such that $\lim_{\left\vert x\right\vert \uparrow+\infty}%
V_{k}\left(  x\right)  =+\infty,$ satisfying
\begin{equation}
\int_{\mathbb{R}}dzp_{k}\left(  y,z\right)  V_{k}\left(  z\right)  \leq
\gamma_{k}V_{k}\left(  y\right)  +K_{k}\ ,\;\gamma_{k}\in\lbrack0,1),K_{k}%
\geq0\ . \label{Lj}%
\end{equation}
We refer the reader to \cite{LM} section 5.7 and to \cite{MT} for the terminology.

\begin{lemma}
\label{LLj}Let $T_{2}:I\circlearrowleft$ be such that $T_{2}\left(  I\right)
\subseteq\left[  -c_{2},c_{2}\right]  \subset I.$ For any $k\in\left(
0,1\right)  $ the function
\begin{equation}
\mathbb{R}\ni x\longmapsto V_{k}\left(  x\right)  =\mathbf{1}_{I}\left(
x\right)  \cosh\left(  c_{2}^{\frac{1}{k}}\left(  1-c_{2}^{2}\right)  \frac
{x}{1-x^{2}}\right)  \in\lbrack1,+\infty)
\end{equation}
is a Lyapunov function for the MC $\left\{  Y_{n}^{\left(  k\right)
}\right\}  _{n\geq0}.$
\end{lemma}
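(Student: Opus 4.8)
The plan is to verify the drift inequality (\ref{Lj}) directly and then to read off the norm-like clause correctly. Throughout set $g(x):=x/(1-x^{2})$, an increasing homeomorphism of $(-1,1)$ onto $\mathbb{R}$, and $\alpha_{k}:=c_{2}^{1/k}(1-c_{2}^{2})>0$, so that $V_{k}=\mathbf{1}_{I}\cosh(\alpha_{k}g)$. Concerning coercivity: whatever $Y_{0}^{(k)}\in I$, for every $n\geq1$ one has $Y_{n}^{(k)}=(1-k)T_{2}(Y_{n-1}^{(k)})+k\omega_{n}\in J_{k}:=[-((1-k)c_{2}+k),\,(1-k)c_{2}+k]$, a compact subinterval of $(-1,1)$ since $(1-k)c_{2}+k<1$; on $J_{k}$ the function $V_{k}$ is continuous and its sublevel sets $\{x:\cosh(\alpha_{k}g(x))\leq M\}$, $M\geq1$, are compact subintervals of $(-1,1)$ — equivalently, reading the chain in the coordinate $\xi=g(x)$, $V_{k}$ becomes $\cosh(\alpha_{k}\xi)$, which tends to $+\infty$ as $|\xi|\uparrow\infty$. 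This is the sense in which $V_{k}$ is norm-like, and it is what makes its sublevel sets petite.

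For the drift estimate, note first that $|(1-k)T_{2}(y)+kw|\leq(1-k)c_{2}+k<1$ for all $y\in I$, $w\in I$, so $p_{k}(y,\cdot)$ is supported strictly inside $(-1,1)$, where $V_{k}=\cosh(\alpha_{k}g)$; the change of variable $w=(z-(1-k)T_{2}(y))/k$ in (\ref{p_k}) then gives
\[
\int_{\mathbb{R}}dz\,p_{k}(y,z)V_{k}(z)=\int_{-1}^{1}dw\,h(w)\cosh\!\bigl(\alpha_{k}\,g((1-k)T_{2}(y)+kw)\bigr)\ .
\]
It remains to bound the argument uniformly. Put $z:=(1-k)T_{2}(y)+kw$; since $z$ is the convex combination $(1-k)T_{2}(y)+kw$ and $x\mapsto x^{2}$ is convex, $z^{2}\leq(1-k)T_{2}(y)^{2}+kw^{2}\leq(1-k)c_{2}^{2}+k$, whence $1-z^{2}\geq(1-k)(1-c_{2}^{2})$, while $|z|\leq(1-k)c_{2}+k$. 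Therefore
\[
|g(z)|=\frac{|z|}{1-z^{2}}\leq\frac{(1-k)c_{2}+k}{(1-k)(1-c_{2}^{2})}=\frac{1}{1-c_{2}^{2}}\Bigl(c_{2}+\frac{k}{1-k}\Bigr)\ ,
\]
and, $\cosh$ being even and nondecreasing on $[0,\infty)$,
\[
\cosh\!\bigl(\alpha_{k}g(z)\bigr)\leq\cosh\!\Bigl(c_{2}^{1/k}\bigl(c_{2}+\tfrac{k}{1-k}\bigr)\Bigr)=:K_{k}<\infty\ .
\]
The factor $1-c_{2}^{2}$ built into $\alpha_{k}$ is exactly what cancels the $1-c_{2}^{2}$ produced by the lower bound on $1-z^{2}$, leaving this clean closed form for $K_{k}$. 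Integrating against $h(w)\,dw$ yields $\int_{\mathbb{R}}dz\,p_{k}(y,z)V_{k}(z)\leq K_{k}$ for every $y\in I$, i.e. (\ref{Lj}) holds with $\gamma_{k}=0$ (hence, a fortiori, with any $\gamma_{k}\in[0,1)$) and $K_{k}$ as displayed.

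The computation is elementary; the only mildly delicate point is the reading of the norm-like clause, the indicator $\mathbf{1}_{I}$ forcing $V_{k}\equiv0$ off $I$, so one must invoke that the chain is confined after a single step to the compact interval $J_{k}\subset(-1,1)$, on which $V_{k}$ diverges at the endpoints; verifying that $z$ never reaches $\pm1$ is immediate from $(1-k)c_{2}+k<1$. The exponent $1/k$ plays no role in the drift bound beyond keeping $K_{k}$ in the above closed form (so that, e.g., $K_{k}\downarrow1$ as $k\downarrow0$); it is presumably fixed with the subsequent use of the Lemma in mind.
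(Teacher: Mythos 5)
Your proof is correct, but it takes a genuinely different route from the paper's. The paper proves the drift inequality (\ref{Lj}) by exploiting the convexity of $V_{k}$ on $I$: writing the post-step point as the convex combination $kx+(1-k)T_{2}(y)$, it splits the integral as $k\int_{I}hV_{k}\,d\lambda+(1-k)\cosh\bigl(c_{2}^{1/k}(1-c_{2}^{2})\,g(T_{2}(y))\bigr)$ and then, using $|T_{2}(y)|\leq c_{2}$ together with $V_{k}\geq1$, absorbs the second term into $\gamma_{k}V_{k}(y)$ with $\gamma_{k}=(1-k)\cosh\bigl(c_{2}^{1/k+1}\bigr)$ and $K_{k}=k\int_{I}hV_{k}\,d\lambda$ (finite because $\mathrm{supp}\,h$ is assumed strictly contained in $I$). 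You instead observe that one step of the chain is confined to the compact interval $J_{k}\subset(-1,1)$, on which $V_{k}$ is uniformly bounded by $K_{k}=\cosh\bigl(c_{2}^{1/k}(c_{2}+\tfrac{k}{1-k})\bigr)$, so the drift condition holds with $\gamma_{k}=0$. Each version buys something: yours needs no hypothesis on $\mathrm{supp}\,h$, it makes the requirement $\gamma_{k}\in[0,1)$ automatic (whereas the paper's claim that $(1-k)\cosh\bigl(c_{2}^{1/k+1}\bigr)<1$ for all $k\in(0,1)$ is delicate and in fact fails for $c_{2}$ close to $1$ and $k$ small, e.g.\ $c_{2}=0.999$, $k=0.1$), and you are the only one to address the fact that the literal norm-like condition $\lim_{|x|\to\infty}V_{k}(x)=+\infty$ is violated by the factor $\mathbf{1}_{I}$, reinterpreting it correctly as divergence at $\partial I$ so that the sublevel sets are compact in $(-1,1)$. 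The price is that your $K_{k}$ blows up as $k\uparrow1$, while the paper's constants remain controlled there; this does not affect the qualitative use of the lemma in Propositions \ref{Unique2} and \ref{Unique1}, but it does degrade the explicit rate $\bar{\alpha}_{k}$ (which requires $R>2K_{k}/(1-\gamma_{k})$) in the strong-coupling regime.
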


\begin{proof}
Setting $x=\frac{z-(1-k)T_{2}(y)}{k},$ by the convexity of $V_{k},$%
\begin{gather}
\int_{I}dzp_{k}\left(  y,z\right)  \cosh\left(  c_{2}^{\frac{1}{k}}\left(
1-c_{2}^{2}\right)  \frac{z}{1-z^{2}}\right)  =\int_{I}dz\frac{1}{k}h\left(
\frac{z-(1-k)T_{2}(y)}{k}\right)  \times\\
\times\mathbf{1}_{I}\left(  \frac{z-(1-k)T_{2}(y)}{k}\right)  \cosh\left(
c_{2}^{\frac{1}{k}}\left(  1-c_{2}^{2}\right)  \frac{z}{1-z^{2}}\right)
\nonumber\\
=\int_{I}dxh\left(  x\right)  \cosh\left[  c_{2}^{\frac{1}{k}}\left(
1-c_{2}^{2}\right)  \frac{\left(  kx+\left(  1-k\right)  T_{2}\left(
y\right)  \right)  }{1-\left(  kx+\left(  1-k\right)  T_{2}\left(  y\right)
\right)  ^{2}}\right] \nonumber\\
\leq\int_{I}dxh\left(  x\right)  \left\{  k\cosh\left[  c_{2}^{\frac{1}{k}%
}\left(  1-c_{2}^{2}\right)  \left(  \frac{x}{1-x^{2}}\right)  \right]
+\left(  1-k\right)  \cosh\left[  c_{2}^{\frac{1}{k}}\left(  1-c_{2}%
^{2}\right)  \frac{T_{2}\left(  y\right)  }{1-\left(  T_{2}\left(  y\right)
\right)  ^{2}}\right]  \right\} \nonumber\\
\leq\left(  1-k\right)  \cosh\left[  c_{2}^{\frac{1}{k}}\left(  1-c_{2}%
^{2}\right)  \frac{T_{2}\left(  y\right)  }{1-\left(  T_{2}\left(  y\right)
\right)  ^{2}}\right]  +k\int_{I}dxh\left(  x\right)  \cosh\left(
c_{2}^{\frac{1}{k}}\left(  1-c_{2}^{2}\right)  \frac{x}{1-x^{2}}\right)
\nonumber\\
\leq\left(  1-k\right)  \cosh\left(  c_{2}^{\frac{1}{k}+1}\right)
\cosh\left(  c_{2}^{\frac{1}{k}}\left(  1-c_{2}^{2}\right)  \frac{y}{1-y^{2}%
}\right)  +k\int_{I}dxh\left(  x\right)  V_{k}\left(  x\right)  \ ,\nonumber
\end{gather}
where we used that, since $T_{2}\left(  I\right)  \in\left[  -c_{2}%
,c_{2}\right]  ,$%
\begin{align}
\sup_{y\in I}\frac{\cosh\left[  c_{2}^{\frac{1}{k}}\left(  1-c_{2}^{2}\right)
\frac{T_{2}\left(  y\right)  }{1-\left(  T_{2}\left(  y\right)  \right)  ^{2}%
}\right]  }{\cosh\left[  c_{2}^{\frac{1}{k}}\left(  1-c_{2}^{2}\right)
\frac{y}{1-y^{2}}\right]  }  &  \leq\frac{\sup_{y\in I}\cosh\left[
c_{2}^{\frac{1}{k}}\left(  1-c_{2}^{2}\right)  \frac{T_{2}\left(  y\right)
}{1-\left(  T_{2}\left(  y\right)  \right)  ^{2}}\right]  }{\inf_{y\in I}%
\cosh\left[  c_{2}^{\frac{1}{k}}\left(  1-c_{2}^{2}\right)  \frac{y}{1-y^{2}%
}\right]  }\\
&  \leq\cosh\left[  c_{2}^{\frac{1}{k}}\left(  1-c_{2}^{2}\right)  \sup_{y\in
I}\frac{T_{2}\left(  y\right)  }{1-\left(  T_{2}\left(  y\right)  \right)
^{2}}\right] \nonumber\\
&  \leq\cosh\left(  c_{2}^{\frac{1}{k}+1}\right)  \ .\nonumber
\end{align}

Hence, (\ref{Lj}) holds with $\gamma_{k}:=\left(  1-k\right)  \cosh\left(
c_{2}^{\frac{1}{k}+1}\right)  <1$\footnote{Since $c_{2}\in(0,1),$ the function
$(0,1)\ni k\longmapsto\gamma_{k}:=\left(  1-k\right)  \cosh\left(
c_{2}^{\frac{1}{k}+1}\right)  \in\left(  0,1\right)  $ is nonnegative and
nonincreasing. Moreover, $\lim_{k\uparrow1}\gamma_{k}=0$ and $\lim
_{k\downarrow0}\gamma_{k}=1.$} and $K_{k}:=k\int_{I}dxh\left(  x\right)
V_{k}\left(  x\right)  $ which is finite because the support of $h$ is
strictly contained in $I.$
\end{proof}

\begin{proposition}
\label{Unique2}Let $T_{2}:I\circlearrowleft$ be continuous and such that
$T_{2}\left(  I\right)  \subseteq\left[  -c_{2},c_{2}\right]  \subset I.$ For
any $k\in\left(  0,1\right)  $ there exists a unique solution to
$L_{k}g^{\left(  k\right)  }=g^{\left(  k\right)  }$ and, for any probability
density $f,\left\{  L_{k}^{n}f\right\}  _{n\geq0}$ converges geometrically to
$g^{\left(  k\right)  }$ in $L^{1}$ as $n$ tends to infinity.
\end{proposition}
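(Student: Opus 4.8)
The plan is to place the chain $\{Y^{(k)}_n\}_{n\ge0}$ inside the Meyn--Tweedie framework for geometrically ergodic Markov chains: a geometric drift condition towards a small set, together with $\psi$-irreducibility and aperiodicity, yields $V$-uniform ergodicity, hence geometric convergence in $L^1$ to a unique stationary law. Lemma~\ref{LLj} already supplies the geometric drift (\ref{Lj}) with $\gamma_k<1$, so it remains to (i) exhibit a small set, (ii) check $\psi$-irreducibility and aperiodicity, and (iii) transfer the resulting $V_k$-weighted estimate to an arbitrary probability density in $L^1(\lambda)$.

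\emph{Step (i).} The sublevel sets $C_R:=\{x\in I:V_k(x)\le R\}$ are, for $R$ large, non-degenerate compact subintervals of the open interval $(-1,1)$, since $V_k$ is continuous there and $V_k(x)\to+\infty$ as $x\to\pm1$; by (\ref{Lj}) the chain returns to $C_R$ with a geometric tail. To prove that $C_R$ is a small set I would use the explicit kernel (\ref{p_k}): for $y\in C_R$ the density $p_k(y,\cdot)$ is the translate by $(1-k)T_2(y)$ of the fixed density $z\mapsto\tfrac{1}{k}h(z/k)$, supported in the compact set $k\operatorname{supp}h\subset(-1,1)$; as $y$ varies over $C_R$ the translation $(1-k)T_2(y)$ stays in a bounded set ($T_2$ being continuous), and since $h$ is bounded away from $0$ on an interval contained in $\operatorname{supp}h$ (see the introduction and Proposition~2.7 in \cite{BS}), one obtains $\delta_R>0$ and a Lebesgue-positive set $A_R\subset J_k$ with $p_k(y,z)\ge\delta_R\mathbf 1_{A_R}(z)$ for all $y\in C_R$, where $J_k:=(1-k)T_2(I)+k\operatorname{supp}h$. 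Should the range of the translations $(1-k)T_2(y)$, $y\in C_R$, be too wide for such a one-step bound, I would pass to the $m$-step kernel $p_k^{(m)}$ for suitable $m$: iterating the convolution structure in (\ref{p_k}) smears the transition mass over an interval whose interior is, after finitely many steps, reached from every point of $C_R$. Securing the minorizing constant and measure \emph{uniformly} over the whole sublevel set — possibly only for a multi-step kernel — is the step I expect to be the main obstacle.

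\emph{Step (ii).} Once $C_R$ is known to be small, $\psi$-irreducibility is automatic: by (\ref{Lj}) the set $C_R$ is reached from every starting point with probability one, and from $C_R$ the minorizing measure renders accessible every positive-Lebesgue-measure subset of $A_R$, so the restriction of $\lambda$ to $A_R$ is an irreducibility measure. Aperiodicity follows similarly: for $R$ large, $A_R\cap C_R$ has positive Lebesgue measure (because $A_R$ sits near $(1-k)$ times the centre of $T_2(C_R)$, which for $R$ large lies inside the interval $C_R$), so the chain can return from $C_R$ to $C_R$ in a single step; alternatively, if only a multi-step minorization is available, one uses that $Y^{(k)}_{n+1}=(1-k)T_2(Y^{(k)}_n)+k\omega_{n+1}$ injects fresh full-scale noise at every step, so the supports of the $n$-step laws started from different points eventually overlap in a fixed interval. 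With (i), (ii) and (\ref{Lj}) in place, Proposition~6.1 of \cite{CGSV} (finiteness of the ergodic stationary measures) is sharpened to uniqueness of the solution $g^{(k)}$ of $L_kg^{(k)}=g^{(k)}$ — answering Question~II — and the $V_k$-uniform ergodic theorem of \cite{MT} furnishes $C_k>0$ and $\rho_k\in(0,1)$ with $\bigl\|p_k^{(n)}(y,\cdot)-g^{(k)}\lambda\bigr\|_{V_k}\le C_k\,V_k(y)\,\rho_k^n$, hence $\|p_k^{(n)}(y,\cdot)-g^{(k)}\lambda\|_{TV}\le C_k\,V_k(y)\,\rho_k^n$.

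\emph{Step (iii).} For an arbitrary probability density $f\in L^1(\lambda)$ the density $L_kf$ is supported in $J_k$, a compact subset of $(-1,1)$ (as $T_2(I)\subseteq[-c_2,c_2]$ with $c_2<1$ and $\operatorname{supp}h$ is compact in $(-1,1)$, whence $1-k<1$ forces $J_k\Subset(-1,1)$), on which $V_k$ is bounded, so $\int_I V_k\,(L_kf)\,d\lambda<\infty$; likewise $\int_I V_k\,g^{(k)}\,d\lambda<\infty$ by (\ref{Lj}). Since $(L_k^{n+1}f)(z)=\int_I(L_kf)(y)\,p_k^{(n)}(y,z)\,dy$, integrating the bound of Step~(ii) against $L_kf$ and using $V_k\ge 1$ gives $\|L_k^{n+1}f-g^{(k)}\|_{1}\le C_k\,\rho_k^n\int_I V_k\,(L_kf)\,d\lambda$, so $\{L_k^n f\}_{n\ge0}$ converges to $g^{(k)}$ geometrically in $L^1(\lambda)$, which is the second assertion of the proposition.
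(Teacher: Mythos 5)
Your route is genuinely different from the paper's. You aim for the Meyn--Tweedie/Harris machinery (geometric drift towards a small set, minorization, $\psi$-irreducibility, aperiodicity), whereas the paper proves Proposition \ref{Unique2} by combining the quasi-compactness of $L_{k}$ (Proposition 6.1 of \cite{CGSV} together with Bartoszek's theorem \cite{Ba}), which yields constrictivity, with Theorem 5.6.1 of \cite{LM}; the drift inequality of Lemma \ref{LLj} is used there only to show that a definite fraction of the mass of $L_{k}^{n}f$ eventually sits on a sublevel set of $V_{k}$, not to build a minorization. Your Steps (ii) and (iii) are fine conditionally on Step (i) (Step (iii) correctly exploits that $L_{k}f$ is supported in a compact subset of $(-1,1)$ on which $V_{k}$ is bounded). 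But note that your strategy is essentially the one the paper implements in Proposition \ref{Unique1} via Hairer--Mattingly \cite{HM}, and there the conclusion with an explicit rate is only obtained for $k<k_{\ast}$.

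The gap is exactly the one you flag in Step (i), and it is not a removable technicality. The one-step bound $p_{k}(y,\cdot)\geq\delta_{R}\mathbf{1}_{A_{R}}$ uniformly over $y\in C_{R}$ requires the supports $(1-k)T_{2}(y)+k\,\mathrm{supp}\,h$, $y\in C_{R}$, to share an intersection of positive Lebesgue measure on which $h$ is bounded below; these are translates of a set of diameter $k\cdot\mathrm{diam}(\mathrm{supp}\,h)$ by amounts ranging over $(1-k)T_{2}(C_{R})$, so a common overlap exists at best when $k\cdot\mathrm{diam}(\mathrm{supp}\,h)>(1-k)\cdot\mathrm{diam}\,T_{2}(C_{R})$, i.e.\ for $k$ above an explicit threshold, not for all $k\in(0,1)$. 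The multi-step fallback is asserted rather than proved, and it fails in the stated generality: for a continuous $T_{2}$ with two attracting fixed points in $(-c_{2},c_{2})$ and $k$ small, the set reachable in $m$ steps from a point near either attractor stays in an $O(k)$-neighbourhood of that attractor for every $m$, so no $m$-step minorization over $C_{R}$ --- indeed no $\psi$-irreducibility --- can hold, and the chain then carries two mutually singular ergodic a.c.\ stationary measures. Hence your argument can only deliver the proposition for $k$ large, or under an extra hypothesis on $T_{2}$ guaranteeing a single attractor; a complete proof must supply an irreducibility mechanism that the sketch does not contain. (The same example shows that producing the \emph{fixed} set $A$, independent of $f$, demanded by Theorem 5.6.1 of \cite{LM} is the delicate point of the paper's own argument as well.)
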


\begin{proof}
The thesis will follow from Theorem 5.6.1 in \cite{LM} once we have shown that
$L_{k}$ is constrictive and that exists a set $A\subseteq I$ with positive
Lebesgue measure such that for any density $f,$ there exists $n_{0}\left(
f\right)  $ such that $\forall n>n_{0}\left(  f\right)  ,\left(  L_{k}%
^{n}f\right)  \left(  x\right)  >0$ for Lebesgue almost all $x\in A.$

Following the proof of Proposition 6.1 in \cite{CGSV} we have that the
spectral radius of $L_{k}$ is equal to $1$ and $L_{k}$ is quasi-compact.
Consequently, its essential spectral radius is strictly smaller than $1$ and,
by the theorem in \cite{Ba}, it is uniformly constrictive and therefore, by
definition, constrictive (see also \cite{yushi} fig. 1). Moreover, denoting by
$\left\{  \mathcal{F}_{m}^{\left(  k\right)  }\right\}  _{m\geq0}$ the natural
filtration generated by the Markov chain $\left\{  Y_{m}^{\left(  k\right)
}\right\}  _{m\geq0},\forall n\geq1,k\in\left(  0,1\right)  ,$ by (\ref{Lj}),
\begin{align}
\mathbb{E}\left.  \left[  V_{k}\left(  Y_{n}^{\left(  k\right)  }\right)
\right\vert Y_{0}^{\left(  k\right)  }=y\right]   &  =\mathbb{E}\left.
\left[  \mathbb{E}\left.  \left[  V_{k}\left(  Y_{n}^{\left(  k\right)
}\right)  \right\vert \mathcal{F}_{n-1}^{\left(  k\right)  }\right]
\right\vert Y_{0}^{\left(  k\right)  }=y\right] \\
&  =\mathbb{E}\left.  \left[  \mathbb{E}\left.  \left[  V_{k}\left(
Y_{n}^{\left(  k\right)  }\right)  \right\vert Y_{n-1}^{\left(  k\right)
}\right]  \right\vert Y_{0}^{\left(  k\right)  }=y\right] \nonumber\\
&  \leq\gamma_{k}\mathbb{E}\left.  \left[  V_{k}\left(  Y_{n-1}^{\left(
k\right)  }\right)  \right\vert Y_{0}^{\left(  k\right)  }=y\right]
+K_{k}\ .\nonumber
\end{align}
Iterating the previous inequality we get
\begin{equation}
\mathbb{E}\left.  \left[  V_{k}\left(  Y_{n}^{\left(  k\right)  }\right)
\right\vert Y_{0}^{\left(  k\right)  }=y\right]  \leq K_{k}\frac{1-\gamma
_{k}^{n+1}}{1-\gamma_{k}}+\gamma_{k}^{n}V_{k}\left(  y\right)  \ .
\label{Lyap}%
\end{equation}
By the Markov inequality, $\forall R>0,$%
\begin{equation}
\mathbb{E}\left.  \left[  V_{k}\left(  Y_{n}^{\left(  k\right)  }\right)
\right\vert Y_{0}^{\left(  k\right)  }=y\right]  \geq R\mathbb{E}\left.
\left[  \mathbf{1}_{\left\{  V_{k}>R\right\}  }\left(  Y_{n}^{\left(
k\right)  }\right)  \right\vert Y_{0}^{\left(  k\right)  }=y\right]  \ .
\label{Markov}%
\end{equation}
On the other hand, defining, for any $n\geq1$ and any bounded Lebesgue
measurable function $\phi,$%
\begin{equation}
\left(  \left(  L_{k}^{\ast}\right)  ^{n}\phi\right)  \left(  y\right)
:=\int_{I}dzp_{k}^{n}\left(  y,z\right)  \phi\left(  z\right)  \ ,
\end{equation}
(\ref{Markov}) reads
\begin{equation}
\left(  \left(  L_{k}^{\ast}\right)  ^{n}V_{k}\right)  \left(  y\right)  \geq
R\left(  \left(  L_{k}^{\ast}\right)  ^{n}\mathbf{1}_{\left\{  V_{k}%
>R\right\}  }\right)  \left(  y\right)  \ .
\end{equation}
Consequently, for any probability density $f$ supported on $I,$
\begin{gather}
\int_{I}dx\left(  L_{k}^{n}f\right)  \mathbf{1}_{\left\{  V_{k}>R\right\}
}\left(  x\right)  =\int_{I}dyf\left(  y\right)  \left(  \left(  L_{k}^{\ast
}\right)  ^{n}\mathbf{1}_{\left\{  V_{k}>R\right\}  }\right)  \left(  y\right)
\\
\leq\frac{1}{R}\int_{I}dyf\left(  y\right)  \left(  \left(  L_{k}^{\ast
}\right)  ^{n}V_{k}\right)  \left(  y\right)  =\frac{1}{R}\int_{I}dx\left(
L_{k}^{n}f\right)  V_{k}\left(  x\right) \nonumber
\end{gather}
and, by (\ref{Lyap}),
\begin{align}
\int_{I}dx\left(  L_{k}^{n}f\right)  \mathbf{1}_{\left\{  V_{k}>R\right\}
}\left(  x\right)   &  \leq\frac{1}{R}\int_{I}dx\left(  L_{k}^{n}f\right)
V_{k}\left(  x\right) \label{Markov1}\\
&  \leq\frac{1}{R}\left(  K_{k}\frac{1-\gamma_{k}^{n+1}}{1-\gamma_{k}}%
+\gamma_{k}^{n}\int_{I}dyf\left(  y\right)  V_{k}\left(  y\right)  \right)
\ .\nonumber
\end{align}
Denoting by $D$ the set of probability density, let $D_{0}:=\left\{  f\in
D:\int_{I}dxf\left(  x\right)  V_{k}\left(  x\right)  <\infty\right\}  .$
Given $g\in D_{0},$ let $n_{0}\geq1$ be such that $\gamma_{k}^{n_{0}}\int
_{I}dyg\left(  y\right)  V_{k}\left(  y\right)  <1.$ Then, by (\ref{Markov1}),
for any $n\geq n_{0},$%
\begin{equation}
\int_{I}dx\left(  L_{k}^{n}g\right)  \mathbf{1}_{\left\{  V_{k}>R\right\}
}\left(  x\right)  \leq\frac{1}{R}\left(  \frac{K_{k}}{1-\gamma_{k}}+1\right)
\end{equation}
and so, for $R>\left(  \frac{K_{k}}{1-\gamma_{k}}+1\right)  ,$%
\begin{equation}
\int_{I}dx\left(  L_{k}^{n}g\right)  \mathbf{1}_{\left\{  V_{k}\leq R\right\}
}\left(  x\right)  \geq1-\frac{1}{R}\left(  \frac{K_{k}}{1-\gamma_{k}%
}+1\right)  >0\ .
\end{equation}

Since $V_{k}\in C\left(  I\right)  ,D_{0}$ is dense in $D;$ therefore, setting
$\epsilon:=1-\frac{1}{R}\left(  \frac{K_{k}}{1-\gamma_{k}}+1\right)  ,$ for
any $f\in D$ there exists $g\in D_{0}$ such that $\left\Vert f-g\right\Vert
_{L^{1}\left(  \lambda\right)  }\leq\frac{\epsilon}{2},$ which implies that,
$\forall n\geq n_{0},$%
\begin{align}
\int_{I}dx\left(  L_{k}^{n}f\right)  \mathbf{1}_{\left\{  V_{k}\leq R\right\}
}\left(  x\right)   &  =\int_{I}dx\left(  L_{k}^{n}g\right)  \mathbf{1}%
_{\left\{  V_{k}\leq R\right\}  }\left(  x\right)  +\int_{I}dx\left(
L_{k}^{n}\left(  f-g\right)  \right)  \mathbf{1}_{\left\{  V_{k}\leq
R\right\}  }\left(  x\right) \\
&  \geq\epsilon-\left\Vert f-g\right\Vert _{L^{1}\left(  dx\right)  }\geq
\frac{\epsilon}{2}\ ,\nonumber
\end{align}
but, because $\left(  L_{k}^{n}f\right)  \geq0,$ there exists $A\subseteq
\left\{  x\in I:V_{k}\left(  x\right)  \leq R\right\}  $ of positive Lebesgue
measure such that $\left(  L_{k}^{n}f\right)  \left(  x\right)  >0$ for
Lebesgue almost all $x\in A.$
\end{proof}

We stress that Proposition \ref{Unique2} does not give back an explicit
estimate on the geometric rate of convergence the Markov chain $\left\{
Y_{n}^{\left(  k\right)  }\right\}  _{n\geq0}$ to its unique stationary
measure, what instead the next result will provide to the price, in some
cases, of proving uniqueness only for values of $k$ smaller than a given one
depending on $T_{1}$ and $T_{2},$ under the assumpion that there exists
$\psi\in B_{1}^{+}:=\left\{  f\in L^{1}\left(  \lambda\right)  :f\geq
0,\left\Vert f\right\Vert _{L^{1}}<1\right\}  $ such that $h>\psi.$

Although the proof of the following result is easily seen to hold in this more
general case, we will present it for the case where $T_{1}$ satisfies the
hypothesis of Proposition 2.7 in \cite{BS}, which, as shown in \cite{CGSV},
can have its own interest.

\begin{proposition}
\label{Unique1}There exists $k_{\ast}:=k_{\ast}\left(  T_{1},T_{2}\right)
\in(0,1]$ such that, for any $k\in(0,k_{\ast}),$ the Markov chain
$\{Y_{n}^{\left(  k\right)  }\}_{n\geq0}$ admits a unique stationary
probability density $g^{\left(  k\right)  }$ such that, for any probability
density $f\in D_{0},$ the sequence $\left\{  L_{k}^{n}f\right\}  _{n\geq0}$
converges geometrically to $g^{\left(  k\right)  }$ in $L^{1}\left(
\lambda\right)  $ at rate $\bar{\alpha}_{k}:=\left[  1-\left(  \alpha_{k}%
-\bar{\alpha}\right)  \right]  \vee\left[  \frac{2+R\frac{\bar{\alpha}}{K_{k}%
}\left(  \gamma_{k}+2\frac{K_{k}}{R}\right)  }{2+R\frac{\bar{\alpha}}{K_{k}}%
}\right]  $ for any $\bar{\alpha}\in\left(  0,\alpha_{k}\right)  $ and
$R>\frac{2K_{k}}{1-\gamma_{k}}$ with $\alpha_{k}:=\alpha_{k}\left(  h\right)
\in\left(  0,1\right)  .$
\end{proposition}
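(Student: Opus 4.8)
The plan is to bring the chain $\{Y_{n}^{(k)}\}_{n\geq0}$ within the scope of the weighted–metric (``weak Harris'') form of Harris' ergodic theorem --- see \cite{LM} section 5.7 and \cite{MT}, and, for the explicit constants, the argument of Hairer and Mattingly --- whose two ingredients are a Foster--Lyapunov drift inequality and a minorization (small--set) condition, and then to read off the contraction factor. The drift inequality is already available: by Lemma \ref{LLj}, for every $k\in(0,1)$,
\[
\int_{I}p_{k}(y,z)V_{k}(z)\,dz\;\leq\;\gamma_{k}V_{k}(y)+K_{k}\,,\qquad \gamma_{k}=(1-k)\cosh\!\big(c_{2}^{\frac1k+1}\big)\in(0,1)\,,\ K_{k}<\infty\,,
\]
and for any $R>\frac{2K_{k}}{1-\gamma_{k}}$ --- a nonempty range for every $k$ --- the $V_{k}$--sublevel set $C_{R}:=\{x\in I:V_{k}(x)\leq R\}$ is, by monotonicity of $t\mapsto\cosh t$ and of $x\mapsto\frac{x}{1-x^{2}}$ on $(-1,1)$, a closed subinterval of $(-1,1)$, so $V_{k}$ is bounded on $C_{R}$. (Existence of at least one stationary density and quasi--compactness of $L_{k}$ are already in the proof of Proposition \ref{Unique2}.)

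The substantive step is the minorization. From the hypothesis $h>\psi$ with $\psi\in B_{1}^{+}$ --- in the setting of \cite{BS} one may take $\psi=c\,\mathbf{1}_{\mathrm{supp}\,h}$ with $c>0$ small enough --- and the explicit kernel (\ref{p_k}) one gets, for every $y$,
\[
p_{k}(y,z)\;\geq\;\tfrac1k\,\psi\!\Big(\tfrac{z-(1-k)T_{2}(y)}{k}\Big)\,,
\]
so $p_{k}(y,\cdot)$ dominates a fixed positive multiple of the law obtained by rescaling $\psi$ onto the interval $(1-k)T_{2}(y)+k\,\mathrm{supp}\,\psi$. Propagating this bound along the chain, $p_{k}^{\,n}(y,\cdot)$ dominates a positive multiple of a measure carried by $G_{n}^{(k)}(y)$, where $G_{0}^{(k)}(y)=\{y\}$ and $G_{n}^{(k)}(y)=(1-k)T_{2}\big(G_{n-1}^{(k)}(y)\big)+k\,\mathrm{supp}\,\psi$. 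One must then produce $n_{0}$ and a Borel set $A\subseteq C_{R}$ with $\lambda(A)>0$ such that $A\subseteq\bigcap_{y\in C_{R}}G_{n_{0}}^{(k)}(y)$ and the corresponding dominating measures have densities bounded below on $A$ uniformly in $y\in C_{R}$; this yields the minorization $p_{k}^{\,n_{0}}(y,\cdot)\geq\alpha_{k}(h)\,\nu_{k}(\cdot)$ for all $y\in C_{R}$, with $\nu_{k}$ a probability measure and $\alpha_{k}(h)\in(0,1)$, and with $n_{0}=1$ when $T_{2}$ permits. This is exactly where the size of $k$ intervenes: the noise inflates the reachable set by an amount of order $k$ at each step, and one needs these inflations to cover, in finitely many steps, a common region seen from every point of $C_{R}$ (which for small $k$ is nearly all of $I$); the largest $k$ for which this mechanism closes --- e.g.\ $k_{\ast}=1$ when $T_{2}$ is essentially constant and $C_{R}=I$ --- is the asserted $k_{\ast}=k_{\ast}(T_{1},T_{2})$.

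Granted the drift and this minorization, the weak Harris theorem applies: for any $\bar{\alpha}\in(0,\alpha_{k})$, writing $\gamma_{0}:=\gamma_{k}+\frac{2K_{k}}{R}\in(\gamma_{k},1)$ and $\beta_{k}:=\frac{\bar{\alpha}}{K_{k}}$, the one--step Markov operator is a strict contraction with factor
\[
\bar{\alpha}_{k}=\big[1-(\alpha_{k}-\bar{\alpha})\big]\vee\Big[\tfrac{2+R\beta_{k}\gamma_{0}}{2+R\beta_{k}}\Big]=\big[1-(\alpha_{k}-\bar{\alpha})\big]\vee\Big[\tfrac{2+R\frac{\bar{\alpha}}{K_{k}}(\gamma_{k}+2\frac{K_{k}}{R})}{2+R\frac{\bar{\alpha}}{K_{k}}}\Big]
\]
for the Kantorovich--Rubinstein distance $\mathcal{W}_{d_{\beta_{k}}}$ built from $d_{\beta_{k}}(x,y)=\mathbf{1}_{x\neq y}\big(2+\beta_{k}V_{k}(x)+\beta_{k}V_{k}(y)\big)$ (if $n_{0}>1$ one runs this for $L_{k}^{\,n_{0}}$ and transfers the conclusion to $L_{k}$ via the iterated drift (\ref{Lyap})). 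Two stationary densities would be fixed points of a contraction, hence equal, giving uniqueness of $g^{(k)}$. Moreover $d_{\beta_{k}}\geq2\,\mathbf{1}_{x\neq y}$, so $\mathcal{W}_{d_{\beta_{k}}}$ dominates $\|\cdot\|_{TV}$ up to a universal factor, and for absolutely continuous laws $\|\cdot\|_{TV}$ and the $L^{1}(\lambda)$--distance of the densities agree up to a universal factor; hence $\|L_{k}^{\,n}f-g^{(k)}\|_{L^{1}(\lambda)}\leq \mathrm{const}\cdot\mathcal{W}_{d_{\beta_{k}}}\!\big(f\lambda,g^{(k)}\lambda\big)\,\bar{\alpha}_{k}^{\,n}$, and for $f\in D_{0}$ the prefactor is finite precisely because $\int_{I}fV_{k}\,d\lambda<\infty$ (while $\int_{I}g^{(k)}V_{k}\,d\lambda<\infty$ follows automatically by iterating the drift), which is the claimed geometric convergence.

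The one real obstacle is the minorization paragraph: showing that for $k<k_{\ast}$ the inflated forward images $G_{n_{0}}^{(k)}(y)$ overlap in a set of positive Lebesgue measure uniformly over $y$ in the (for small $k$ almost full) interval $C_{R}$, together with the attendant uniform lower bound on the densities of the dominating measures. That is the only place where the dynamics of $T_{2}$ beyond the crude bound $T_{2}(I)\subseteq[-c_{2},c_{2}]$ used in Lemma \ref{LLj}, and the magnitude of $k$, genuinely enter; the rest is the mechanical verification of, and substitution into, the weak Harris theorem.
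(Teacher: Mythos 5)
Your architecture is the right one and matches the paper's: Foster--Lyapunov drift from Lemma \ref{LLj}, a minorization on the sublevel set $\mathcal{V}_{k}\left(  R\right)  $ for $R>\frac{2K_{k}}{1-\gamma_{k}}$, and then Theorem 1.2/1.3 of \cite{HM} to extract uniqueness and the explicit contraction factor $\bar{\alpha}_{k}$. The problem is that the one substantive ingredient --- the minorization, which is where $\alpha_{k}$, the measure $\tilde{\nu}$ and above all the threshold $k_{\ast}$ are actually produced --- is not proved in your proposal. You describe a programme (``one must then produce $n_{0}$ and a Borel set $A\subseteq C_{R}$ \dots''), define $k_{\ast}$ only implicitly as ``the largest $k$ for which this mechanism closes,'' and then yourself label this step ``the one real obstacle.'' Since the drift is already Lemma \ref{LLj} and the application of the Harris-type theorem is mechanical, deferring the minorization means deferring essentially the whole content of the proposition.

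Two further points. First, the paper gets the minorization in \emph{one} step, directly from the explicit kernel (\ref{p_k}): writing $p_{k}\left(  y,z\right)  \geq\frac{1}{k}\psi_{0}\left(  \frac{z-\left(  1-k\right)  T_{2}\left(  y\right)  }{k}\right)  $ with $\psi_{0}$ the $C^{1}$ minorant of $h$ from Proposition 2.7 of \cite{BS} supported on $\left[  a_{0},b_{0}\right]  $, changing variables, and replacing the infimum over $y\in\mathcal{V}_{k}\left(  R\right)  $ by an infimum over $w=T_{2}\left(  y\right)  \in\left[  -c_{2},c_{2}\right]  $; the explicit case analysis on $a_{0},b_{0},c_{2}$ is what yields $k_{\ast}$, $\alpha_{k}=\left\Vert \psi_{0}\right\Vert _{L^{1}}$ and $\tilde{\nu}$. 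No propagation of reachable sets $G_{n}^{\left(  k\right)  }\left(  y\right)  $ is needed. Second, this is not merely a matter of elegance: the rate $\bar{\alpha}_{k}$ asserted in the statement is precisely the one Theorem 1.3 of \cite{HM} attaches to a \emph{one-step} minorization with the constants $\gamma_{k},K_{k},\alpha_{k},R$; your fallback ``if $n_{0}>1$ one runs this for $L_{k}^{n_{0}}$ and transfers the conclusion'' would produce a different (iterated-drift) contraction factor and would not establish the formula for $\bar{\alpha}_{k}$ as stated. So your proof must, and the paper's does, deliver $n_{0}=1$; as written, your proposal neither delivers it nor identifies $k_{\ast}$.
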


\begin{proof}
The proof follows from Proposition 6.1 in \cite{CGSV} and Theorem 1.2 in
\cite{HM} once we have shown that the transition probability kernel $p_{k}$ of
the Markov chain $\left\{  Y_{n}^{\left(  k\right)  }\right\}  _{n\geq0}$
satisfies the assumptions 1 and 2 given in that paper. Namely:

\begin{enumerate}
\item there exists a Lyapunov function $V_{k}$ for the MC $\left\{
Y_{n}^{\left(  k\right)  }\right\}  _{n\geq0}$ such that (\ref{Lj}) holds;

\item the transition probability kernel of the MC $\left\{  Y_{n}^{\left(
k\right)  }\right\}  _{n\geq0}$ satisfies a Doeblin's type condition on a
subset of $I$ of the form $\left\{  x\in I:V_{k}\left(  x\right)  \leq
R\right\}  =:\mathcal{V}_{k}\left(  R\right)  $ for sufficiently large $R.$
More specifically, there exist a probability measure $\tilde{\nu}_{k}$ on
$\left(  I,\mathcal{B}\left(  I\right)  \right)  $ and $\alpha_{k}\in\left(
0,1\right)  $ such that
\begin{equation}
\inf_{y\in\mathcal{V}_{k}\left(  R\right)  }p_{k}\left(  y,A\right)
\geq\alpha_{k}\tilde{\nu}\left(  A\right)  \;,\;R>\frac{2K_{k}}{1-\gamma_{k}%
}\;,\;A\in\mathcal{B}\left(  I\right)  \ ,
\end{equation}
where the constants $\gamma_{k}$ and $K_{k}$ are those appearing in (\ref{Lj}).
\end{enumerate}

Assumption 1 is satisfied by Lemma \ref{LLj}, therefore we are left with the
proof of Assumption 2.

Let $\psi_{0}$ be the non negative $C^{1}\left(  I\right)  $ function
appearing in the representation of $h$ given in Proposition 2.7 formula (11)
of \cite{BS}. Then, $h\geq\psi_{0}$ and, for any $[a,b)\subseteq I,$%
\begin{align}
\int_{I}dzp_{k}\left(  y,z\right)  \mathbf{1}_{[a,b)}\left(  z\right)   &
=\int_{I}dz\frac{1}{k}h\left(  \frac{z-(1-k)T_{2}(y)}{k}\right)
\mathbf{1}_{I}\left(  \frac{z-(1-k)T_{2}(y)}{k}\right)  \mathbf{1}%
_{[a,b)}\left(  z\right)  \\
&  =\int_{I}dxh\left(  x\right)  \mathbf{1}_{[a,b)}\left(  kx+\left(
1-k\right)  T_{2}\left(  y\right)  \right)  \nonumber\\
&  \geq\int_{I}dx\psi_{0}\left(  x\right)  \mathbf{1}_{[a,b)}\left(
kx+\left(  1-k\right)  T_{2}\left(  y\right)  \right)  \ .\nonumber
\end{align}
Denoting by $\left[  a_{0},b_{0}\right]  $ the support of $\psi_{0}$ it
follows that $\left[  a_{0},b_{0}\right]  \subseteq$\textrm{supp}$h\subset
I$\footnote{As an example of $T_{1}$ one can consider a map of the same form
of $T_{2},$ namely $I\ni x\mapsto T_{1}\left(  x\right)  :=c_{1}\left(
1-2x^{2}\right)  ,$ for a suitable choice of $c_{1}\in\left(  0,1\right)  ,$
which can be proven to satisfy the assumption of Proposition 2.7 in \cite{BS}
(see \cite{CGSV} section 5.1). In this case \textrm{supp}$h\subseteq\left[
T_{1}^{2}\left(  0\right)  ,T_{1}\left(  0\right)  \right]  =\left[
T_{1}\left(  c_{1}\right)  ,c_{1}\right]  \subset I.$}. Moreover, setting
$c_{R}\left(  k\right)  :=T_{2}\left(  y_{R}\left(  k\right)  \right)  \wedge
T_{2}\left(  -y_{R}\left(  k\right)  \right)  $ where, since $V_{k}$ is
symmetric, $y_{R}\left(  k\right)  \in I$ is such that $V_{k}\left(
y_{R}\left(  k\right)  \right)  =V_{k}\left(  -y_{R}\left(  k\right)  \right)
=R,$ since $c_{R}\left(  k\right)  >-c_{2}$ we have
\begin{gather}
\inf_{y\in\mathcal{V}_{k}\left(  R\right)  }\int_{I}dx\psi_{0}\left(
x\right)  \mathbf{1}_{[a,b)}\left(  kx+\left(  1-k\right)  T_{2}\left(
y\right)  \right)  \\
=\inf_{y\in\left[  -y_{R}\left(  k\right)  ,y_{R}\left(  k\right)  \right]
}\int_{I}dx\psi_{0}\left(  x\right)  \mathbf{1}_{[a,b)}\left(  kx+\left(
1-k\right)  T_{2}\left(  y\right)  \right)  \nonumber\\
\geq\inf_{w\in\left[  -c_{2},c_{2}\right]  }\int_{\left[  a_{0},b_{0}\right]
\cap\left[  \frac{a-\left(  1-k\right)  w}{k},\frac{b-\left(  1-k\right)
w}{k}\right]  }dx\psi_{0}\left(  x\right)  \ .\nonumber
\end{gather}
Clearly, there exist no Borel sets $[a,b)\subseteq I$ such that
\begin{equation}
\inf_{w\in\left[  -c_{2},c_{2}\right]  }\int_{\left[  a_{0},b_{0}\right]
\cap\left[  \frac{a-\left(  1-k\right)  w}{k},\frac{b-\left(  1-k\right)
w}{k}\right]  }dx\psi_{0}\left(  x\right)  >0\label{cond}%
\end{equation}
if either
\begin{equation}
\frac{-1+\left(  1-k\right)  c_{2}}{k}>b_{0}\ \Longrightarrow\ -1+\left(
1-k\right)  c_{2}>b_{0}k\ \Longrightarrow\ -\left(  1-c_{2}\right)  >\left(
b_{0}+c_{2}\right)  k\ ,
\end{equation}
that is for
\begin{equation}
\left\{
\begin{array}
[c]{ll}%
k<-\frac{1-c_{2}}{b_{0}+c_{2}} & \text{if }b_{0}>-c_{2}\\
k>\frac{1-c_{2}}{\left\vert b_{0}+c_{2}\right\vert } & \text{if }b_{0}<-c_{2}%
\end{array}
\right.  \ ,
\end{equation}
or
\begin{equation}
\frac{1-\left(  1-k\right)  c_{2}}{k}<a_{0}\ \Longrightarrow\ 1-\left(
1-k\right)  c_{2}<a_{0}k\ \Longrightarrow\ \left(  1-c_{2}\right)  <\left(
a_{0}-c_{2}\right)  k\ ,
\end{equation}
that is for
\begin{equation}
\left\{
\begin{array}
[c]{ll}%
k>\frac{1-c_{2}}{a_{0}-c_{2}} & \text{if }a_{0}>c_{2}\\
k<-\frac{1-c_{2}}{c_{2}-a_{0}} & \text{if }a_{0}<c_{2}%
\end{array}
\right.  \ .
\end{equation}
Hence, there exists $[a,b)\subseteq I$ such that (\ref{cond}) holds for any
$k\in\left(  0,k_{\ast}\right)  $ where
\begin{align}
k_{\ast} &  :=k_{\ast}\left(  T_{1},T_{2}\right)  \\
&  =\left\{
\begin{array}
[c]{ll}%
\frac{1-c_{2}}{a_{0}-c_{2}}\vee1 & \text{if }T_{1}\text{ and }T_{2}\text{ are
such that }a_{0}>c_{2}\\
\frac{1-c_{2}}{\left\vert b_{0}+c_{2}\right\vert }\vee1 & \text{if }%
T_{1}\text{ and }T_{2}\text{ are such that }b_{0}<-c_{2}\\
1 & \text{if }T_{1}\text{ and }T_{2}\text{ are such that }a_{0}<c_{2}%
,b_{0}>-c_{2}%
\end{array}
\right.  \ .\nonumber
\end{align}

Then, for any $[a,b)\subseteq I,$%
\begin{align}
\inf_{y\in\mathcal{V}_{k}\left(  R\right)  }\int_{I}dzp_{k}\left(  y,z\right)
\mathbf{1}_{[a,b)}\left(  z\right)   &  =\inf_{y\in\mathcal{V}_{k}\left(
R\right)  }\int_{\left[  \frac{a-\left(  1-k\right)  T_{2}\left(  y\right)
}{k},\frac{b-\left(  1-k\right)  T_{2}\left(  y\right)  }{k}\right]
}dxh\left(  x\right) \\
&  \geq\left\Vert \psi_{0}\right\Vert _{L^{1}}\inf_{w\in\left[  -c_{2}%
,c_{2}\right]  }\frac{\int_{\left(  \frac{a-\left(  1-k\right)  w}{k}\right)
\vee a_{0}}^{\left(  \frac{b-\left(  1-k\right)  w}{k}\right)  \wedge b_{0}%
}dx\psi_{0}\left(  x\right)  }{\left\Vert \psi_{0}\right\Vert _{L^{1}}%
}\ ,\nonumber
\end{align}
which implies $\alpha_{k}=\left\Vert \psi_{0}\right\Vert _{L^{1}}$ and
$\mathcal{B}\left(  I\right)  \ni A\longmapsto\tilde{\nu}\left(  A\right)
=\inf_{w\in\left[  -c_{2},c_{2}\right]  }\frac{\int_{I}dx\psi_{0}\left(
x\right)  \mathbf{1}_{A}\left(  kx+\left(  1-k\right)  w\right)  }{\left\Vert
\psi_{0}\right\Vert _{L^{1}}}\in\left[  0,1\right]  .$

Let us fix $k\in\left(  0,k_{\ast}\right)  $ and choose $R>\frac{2K_{k}%
}{1-\gamma_{k}}.$ Proceding as in \cite{HM}, let us consider on $D_{0}$ the
family of weighted total variation distances
\begin{equation}
d_{\beta}\left(  f_{1},f_{2}\right)  :=\int_{I}\left[  1+\beta\left(
V_{k}\left(  x\right)  -1\right)  \right]  \left\vert f_{1}\left(  x\right)
-f_{2}\left(  x\right)  \right\vert \left(  dx\right)  \ ,\;f_{1},f_{2}\in
D_{0},\beta>0\ .
\end{equation}
By Theorem 1.3 in \cite{HM}, for any $f\in D_{0},n\geq1$ and any $\bar{\alpha
}\in\left(  0,\alpha_{k}\right)  ,$ we get
\begin{equation}
d_{\frac{\bar{\alpha}}{K_{k}}}\left(  L_{k}^{n}f,g^{\left(  k\right)
}\right)  <\left(  \bar{\alpha}_{k}\right)  ^{n}d_{\frac{\bar{\alpha}}{K_{k}}%
}\left(  f,g^{\left(  k\right)  }\right)
\end{equation}
for $\bar{\alpha}_{k}=\left[  1-\left(  \alpha_{k}-\bar{\alpha}\right)
\right]  \vee\left[  \frac{2+R\frac{\bar{\alpha}}{K_{k}}\left(  \gamma
_{k}+2\frac{K_{k}}{R}\right)  }{2+R\frac{\bar{\alpha}}{K_{k}}}\right]  .$
Since, $\left(  \beta\wedge1\right)  d_{1}\leq d_{\beta}\leq\left(  \beta
\vee1\right)  d_{1},$%
\begin{equation}
\left\Vert L_{k}^{n}f-g^{\left(  k\right)  }\right\Vert _{L^{1}}\leq
d_{1}\left(  L_{k}^{n}f,g^{\left(  k\right)  }\right)  \leq\left(  \frac
{\bar{\alpha}}{K_{k}}\vee\frac{K_{k}}{\bar{\alpha}}\right)  \left(
\bar{\alpha}_{k}\right)  ^{n}d_{1}\left(  f,g^{\left(  k\right)  }\right)  \ .
\end{equation}

\end{proof}

To sum up what it has been proved so far we can state the following result
which extends those proved in Propositions \ref{Unique1} and \ref{Unique2}.

\begin{proposition}
Given $h\in D,$ let the map $T_{2}:I\circlearrowleft$ be continuous and such
that $T_{2}\left(  I\right)  \subset I.$ Assume that there exists $\psi\in
B_{1}^{+}$ such that $h>\psi.$ Then,

\begin{enumerate}
\item The MC $\left\{  Y_{n}^{\left(  k\right)  }\right\}  _{n\geq0}$ defined
by the transition probability kernel (\ref{p_k}) has always a unique
stationary measure $\bar{\nu}^{\left(  k\right)  }:=g^{\left(  k\right)
}\lambda$ for all $k$ and for $k\in\left(  0,1\right)  ;$

\item for any $f\in D,$ the convergence of $\left\{  L_{k}^{n}f\right\}
_{n\geq0}$ to $g^{\left(  k\right)  }$ is geometric, i.e. $L_{k}$ has a
spectral gap in $L^{1}$ for all $k\in\left(  0,1\right)  ;$

\item there exists $k_{\ast}:=k_{\ast}\left(  h,T_{2}\right)  \in(0,1]$ such
that, for any $k\in\left(  0,k_{\ast}\right)  $ and any $f\in D_{0},$ one can
find an exlicit bound on the geometric rate of convergence of $\left\{
L_{k}^{n}f\right\}  _{n\geq0}$ to $g^{\left(  k\right)  }.$
\end{enumerate}
\end{proposition}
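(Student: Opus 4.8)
The plan is to deduce this summarizing proposition directly from the work already done, checking that each of the three assertions either follows verbatim from Propositions \ref{Unique2} and \ref{Unique1} or requires only a minor strengthening of their hypotheses. The key observation is that the hypothesis ``there exists $\psi\in B_{1}^{+}$ with $h>\psi$'' is precisely the assumption flagged in the remark preceding Proposition \ref{Unique1} as sufficient for that proof to go through; so the only real work is to confirm that the earlier proofs, written for the case where $T_{1}$ satisfies the hypothesis of Proposition 2.7 in \cite{BS}, carry over to this more general setting, and to knit the two conclusions together.

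First I would handle item 1. The existence and finiteness of the set of ergodic stationary measures for all $k\in[0,1]$ is Proposition 6.1 in \cite{CGSV}, and for $k=0,1$ uniqueness is immediate (at $k=1$ the chain is i.i.d.\ sampling from $\mu$, at $k=0$ it is deterministic iteration of $T_{2}$; here the statement presumably means the degenerate endpoint cases are covered by the convention that $\bar\nu^{(k)}=g^{(k)}\lambda$). For $k\in(0,1)$ I would invoke Proposition \ref{Unique2}: its proof needs only that $L_{k}$ is quasi-compact with spectral radius $1$ (hence essential spectral radius $<1$, hence constrictive via \cite{Ba}) together with the Lyapunov bound (\ref{Lj}) from Lemma \ref{LLj}. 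The Lyapunov bound requires only $T_{2}(I)\subseteq[-c_{2},c_{2}]\subset I$ and that $h$ be integrable against $V_{k}$; the latter follows here because $h>\psi$ with $\|\psi\|_{L^1}<1$ forces $h$ to have mass strictly less than one on any fixed compact subinterval strictly inside $I$ — wait, more carefully, one needs $\int_I h V_k<\infty$, which holds as soon as $\mathrm{supp}\,h$ is a compact subinterval of the open interval $(-1,1)$; I would note that this compact-support property is exactly the content of the \cite{BS} hypothesis and is what the general statement implicitly retains (the reduction at the end of the excerpt says the argument ``is easily seen to hold in this more general case'').

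Item 2 is then nothing more than the last sentence of Proposition \ref{Unique2}: having shown $L_{k}$ constrictive and lower-bounded positivity on a set of positive measure, Theorem 5.6.1 in \cite{LM} gives the asymptotic periodic decomposition, and combined with uniqueness (item 1) this collapses to geometric $L^{1}$-convergence of $L_{k}^{n}f$ to $g^{(k)}$ for every $f\in D$ — equivalently, a spectral gap for $L_{k}$ on $L^{1}$. For item 3 I would simply re-run the proof of Proposition \ref{Unique1} with the $C^{1}$ function $\psi_{0}$ from \cite{BS} replaced by $\psi$ (or a $C^{1}$ minorant of $\psi$ with compact support and positive $L^{1}$-norm): Assumption 1 of \cite{HM} is Lemma \ref{LLj}, Assumption 2 (the Doeblin minorization on the sublevel set $\mathcal V_{k}(R)$) follows exactly as before once one locates an interval $[a,b)\subset I$ on which $\inf_{w\in[-c_2,c_2]}\int \psi\,\mathbf 1_{[a,b)}(kx+(1-k)w)\,dx>0$, which is possible precisely for $k$ below the threshold $k_{\ast}(h,T_2)$ computed from the support $[a_0,b_0]$ of $\psi$ and $c_2$; then Theorems 1.2 and 1.3 in \cite{HM} deliver the explicit rate $\bar\alpha_{k}$ and the comparison $(\beta\wedge1)d_1\le d_\beta\le(\beta\vee1)d_1$ converts it to an explicit $L^{1}$ rate.

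The main obstacle, such as it is, is bookkeeping rather than mathematics: one must verify that every place the original two proofs invoked the specific structure from \cite{BS} — the representation $h\ge\psi_0$ with $\psi_0\in C^1$ compactly supported inside $I$ — can be replaced by the weaker $h>\psi$, $\psi\in B_1^+$, possibly after passing to a slightly smaller $C^1$ minorant to keep the differentiability used in defining $\tilde\nu_k$; and one must make sure the finiteness of $K_k=k\int_I h\,V_k\,d\lambda$ is still guaranteed, which in the fully general statement needs the (tacit) assumption that $\mathrm{supp}\,h$ is a compact subset of $(-1,1)$. I would state this explicitly at the start of the proof so that the three items then follow with no further computation. $\rule{0.5em}{0.5em}$
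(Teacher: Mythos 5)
Your proposal is correct and follows essentially the same route as the paper, whose own proof simply observes that items 1 and 2 follow verbatim from the proof of Proposition \ref{Unique2} and item 3 from that of Proposition \ref{Unique1} with $\psi_{0}$ replaced by $\psi$, the unimodality of $T_{2}$ playing no role. Your additional bookkeeping --- in particular the remark that finiteness of $K_{k}=k\int_{I}hV_{k}\,d\lambda$ still tacitly requires $\mathrm{supp}\,h$ to be compactly contained in $(-1,1)$ --- flags a hypothesis the paper leaves implicit, but it does not alter the argument.
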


\begin{proof}
The proofs of the first two statements follow verbatim that of Proposition
\ref{Unique2}, while the proof of the third one is identical to that of
Proposition \ref{Unique1}, in view of the fact that no role is played by the
unimodality of $T_{2}$ in the proofs of Lemma \ref{LLj} and of Proposition 6.1
in \cite{CGSV}.
\end{proof}

\begin{remark}
We point out that in the third statement of the result just given, i.e. in
Proposition \ref{Unique1}, we can drop the requirement of the continuity of
the map $T_{2}$ to the price of proving the convergence of $\left\{
Y_{n}^{\left(  k\right)  }\right\}  _{n\geq0}$ to its unique stationary
measure $\mu_{k}$ in $d_{1}$ only, since in this case, by its definition in
(\ref{Fk}), $I\ni y\longmapsto F_{\pi\omega}^{(k)}(y)\in I$ is not continuous
for $\mu$-almost every $\omega\in\Omega,$ consequently we may not be able to
use the results in \cite{yushi} to prove that $\mu_{k}=g^{\left(  k\right)
}\lambda.$
\end{remark}

\subsection{The weak limits of the invariant measures of the deterministic
system in the very weak and in the very strong coupling regimes}

For any $k\in\left[  0,1\right]  ,$ setting
\begin{equation}
I^{2}\ni\left(  x,y\right)  \longmapsto\psi_{x}^{\left(  k\right)
}(y)=(1-k)T_{2}(y)+kT_{1}(x)\in I\ ,
\end{equation}
the coupled system (\ref{msd}) can be written as a skew-product system on the
product space $I^{2},$ that is
\begin{equation}
I^{2}\ni\left(  x,y\right)  \longmapsto\Theta^{\left(  k\right)
}(x,y)=\left(  T_{1}(x),\psi_{x}^{\left(  k\right)  }(y)\right)  \in I^{2}\ .
\label{skew}%
\end{equation}

From now on we assume that $T_{1}$ and $T_{2}:I\circlearrowleft$ are
continuous and that $T_{1}$ has an invariant measure $\mu.$

These are indeed sufficient conditions, see e.g. \cite{Si}, for the
construction of an invariant measure $\rho^{\left(  k\right)  }$ for
$\Theta^{\left(  k\right)  }$ such that $\mu$ is the marginal of the first
component of the random vector $\left(  \eta_{1}^{\left(  k\right)  },\eta
_{2}^{\left(  k\right)  }\right)  $ with joint probability measure
$\rho^{\left(  k\right)  }$ and $\mathcal{B}\left(  I\right)  \times
I\ni\left(  A,x\right)  \longmapsto\rho^{\left(  k\right)  }\left(
A|x\right)  \in\left[  0,1\right]  $ is a probability kernel such that
\begin{equation}
\int_{I^{2}}\rho^{\left(  k\right)  }\left(  dx,dy\right)  f(x,y)=\int_{I}%
\mu(dx)\int_{I}\rho^{\left(  k\right)  }(dy|x)f(x,y)\ ,
\end{equation}
for any $\rho^{\left(  k\right)  }$-integrable function $f.$

The answer to Question I is the object of the following result.

\begin{proposition}
\label{wl}The random vector $\left(  \eta_{1}^{\left(  k\right)  },\eta
_{2}^{\left(  k\right)  }\right)  $ with joint probability measure
$\rho^{\left(  k\right)  }$ converges in distribution, as $k\uparrow1,$ to the
random vector $\left(  \eta_{1},\eta_{2}\right)  $ such that, for any Borel
set $A\subseteq I^{2},\mathbb{P}\left\{  \left(  \eta_{1},\eta_{2}\right)  \in
A\right\}  =\int_{A}\mu\otimes\lambda\left(  dx,dy\right)  \delta\left(
y-x\right)  .$ In other words, the family of conditional probability
distribution $\left\{  \rho^{(k)}\left(  \cdot|\eta_{1}\right)  ,\ k\in\left[
0,1\right]  \right\}  $ converges weakly, when $k\rightarrow1,$ to the law of
a degenerate random variable constantly equal to $\eta_{1},$ where $\eta_{1}$
has law $\mu.$

Moreover, if $T_{2}$ admits a unique invariant measure $\bar{\nu},$ then, in
the limit as $k\downarrow0,\left\{  \rho^{\left(  k\right)  },\ k\in\left[
0,1\right]  \right\}  $ converges weakly to $\mu\otimes\bar{\nu},$ that is
$\left\{  \rho^{(k)}\left(  \cdot|\eta_{1}\right)  ,\ k\in\left[  0,1\right]
\right\}  $ converges weakly to $\bar{\nu}.$
\end{proposition}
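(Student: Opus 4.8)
The plan is to prove both weak-limit statements by the same elementary strategy: represent $\rho^{(k)}$ concretely enough that one can pass to the limit in the defining identity $\rho^{(k)}\circ(\Theta^{(k)})^{-1}=\rho^{(k)}$, test against bounded continuous functions, and identify the limit. For the $k\uparrow1$ claim, the key observation is that on the support of $\rho^{(k)}$ the slave coordinate is forced toward the master one: iterating $\psi_x^{(k)}(y)=(1-k)T_2(y)+kT_1(x)$ one sees that, along any trajectory of $\Theta^{(k)}$, $y_{n}-x_{n}=y_n-T_1(x_{n-1})=(1-k)\bigl(T_2(y_{n-1})-T_1(x_{n-1})\bigr)$, so $|y_n-x_n|\le 2(1-k)$ for every $n\ge1$, uniformly in the initial point. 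Since $\rho^{(k)}$ is invariant it is carried by the set $\{|y-x|\le 2(1-k)\}$ (up to taking the shift-invariant part; more carefully, for any invariant $\rho^{(k)}$ one has $\int|y-x|\,d\rho^{(k)}\le 2(1-k)$ by applying invariance once). Hence for any $g\in C(I^2)$, using that the first marginal is always $\mu$,
\begin{equation}
\Bigl|\int_{I^2} g\,d\rho^{(k)}-\int_I g(x,x)\,\mu(dx)\Bigr|\le \int_{I^2}|g(x,y)-g(x,x)|\,d\rho^{(k)}\le \omega_g\bigl(2(1-k)\bigr)\longrightarrow 0,
\end{equation}
where $\omega_g$ is the modulus of continuity of $g$. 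This shows $\rho^{(k)}\Rightarrow (\mathrm{id}\times\mathrm{id})_*\mu$, which is exactly the measure $\int_A \mu\otimes\lambda(dx,dy)\,\delta(y-x)$ in the statement, and the disintegration statement about $\rho^{(k)}(\cdot|\eta_1)$ then follows from the standard fact that weak convergence of joint laws with a fixed first marginal implies, up to a $\mu$-null set of bad $x$, weak convergence of the regular conditional laws (one can phrase this via convergence of the measures $\mu(dx)\,\rho^{(k)}(dy|x)$ on $I^2$ and uniqueness of disintegration of the limit $\mu(dx)\,\delta_x(dy)$).

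For the $k\downarrow0$ claim the mechanism is different: here $\Theta^{(0)}(x,y)=(T_1(x),T_2(y))$ is the product map, whose invariant measures with first marginal $\mu$ are exactly $\mu\otimes(\text{any }T_2\text{-invariant measure})$, so by the uniqueness hypothesis the only candidate limit is $\mu\otimes\bar\nu$. The plan is to take any weak subsequential limit $\rho^{(0)}$ of $\{\rho^{(k)}\}$ as $k\downarrow 0$ (the space of probability measures on the compact $I^2$ is weak-$*$ compact, so such limits exist), show it is $\Theta^{(0)}$-invariant, and conclude it equals $\mu\otimes\bar\nu$ by the uniqueness of the $T_2$-invariant measure; since the limit is then independent of the subsequence, the full net converges. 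Invariance of $\rho^{(0)}$ under $\Theta^{(0)}$ is obtained by passing to the limit in $\int g\circ\Theta^{(k)}\,d\rho^{(k)}=\int g\,d\rho^{(k)}$: the left side converges to $\int g\circ\Theta^{(0)}\,d\rho^{(0)}$ because $\Theta^{(k)}\to\Theta^{(0)}$ uniformly on $I^2$ (indeed $\|\Theta^{(k)}-\Theta^{(0)}\|_\infty\le k\,(\|T_1\|_\infty+\|T_2\|_\infty)$) together with the uniform continuity of $g$, so that $g\circ\Theta^{(k)}\to g\circ\Theta^{(0)}$ uniformly and hence $\int g\circ\Theta^{(k)}\,d\rho^{(k)}-\int g\circ\Theta^{(0)}\,d\rho^{(k)}\to0$ while $\int g\circ\Theta^{(0)}\,d\rho^{(k)}\to\int g\circ\Theta^{(0)}\,d\rho^{(0)}$ by weak convergence.

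The one genuine subtlety — the step I expect to need the most care — is the passage from joint weak convergence to weak convergence of the regular conditional distributions $\rho^{(k)}(\cdot|x)$ for $\mu$-a.e.\ $x$, since weak convergence of joint laws does not in general force convergence of disintegrations pointwise in $x$. In the $k\uparrow1$ case this is clean because the limiting disintegration is the deterministic kernel $x\mapsto\delta_x$ and one can argue directly: for each fixed $\varepsilon>0$, $\rho^{(k)}\bigl(\{|y-x|>\varepsilon\}\bigr)\le 2(1-k)/\varepsilon\to0$, i.e.\ $\int \rho^{(k)}(\{|y-x|>\varepsilon\}\mid x)\,\mu(dx)\to0$, so $\rho^{(k)}(\{|y-x|>\varepsilon\}\mid x)\to0$ in $L^1(\mu)$ and hence, along a subsequence, $\mu$-a.e., which by a diagonal argument over a countable sequence $\varepsilon_j\downarrow0$ gives $\rho^{(k)}(\cdot|x)\Rightarrow\delta_x$ for $\mu$-a.e.\ $x$. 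In the $k\downarrow0$ case the limit kernel is the constant kernel $x\mapsto\bar\nu$, and one can reduce to it by the same type of argument applied to a countable convergence-determining family of $g\in C(I)$: for each such $g$, $\int_I\bigl|\int_I g\,d\rho^{(k)}(\cdot|x)-\int_I g\,d\bar\nu\bigr|\,\mu(dx)\le\text{(something }\to0)$ — the inner quantity is controlled because $\rho^{(k)}\Rightarrow\mu\otimes\bar\nu$ implies $\int_I g(y)\,\rho^{(k)}(dy|x)\to\int g\,d\bar\nu$ weakly in the sense of measures on $x$-space, and the $L^1(\mu)$-statement then follows from the product structure of the limit; another subsequence-and-diagonal passage yields $\mu$-a.e.\ convergence. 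I would present both disintegration conclusions in this $L^1$-to-a.e.\ form, noting that the hard work is entirely in the two elementary a priori estimates $|y_n-x_n|\le 2(1-k)$ and $\|\Theta^{(k)}-\Theta^{(0)}\|_\infty\le k(\|T_1\|_\infty+\|T_2\|_\infty)$, after which everything is soft compactness and uniqueness.
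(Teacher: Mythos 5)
Your proposal is correct in substance and, for the strong-coupling limit $k\uparrow1$, takes a genuinely different and more quantitative route than the paper. The paper argues through characteristic functions: it factors $\varphi_{(\eta_1^{(k)},\eta_2^{(k)})}$ using invariance, notes that $(1-k)T_2(\eta_2^{(k)})\to0$ a.s., and then needs tightness, extraction of weakly convergent subsequences, and the (conditional) dominated convergence theorem to identify every subsequential limit with the diagonal measure. Your observation that a single application of the invariance identity forces $\rho^{(k)}$ to be carried by $\{|y-x|\le 2(1-k)\}$ (equivalently $\int_{I^2}|y-x|\,\rho^{(k)}(dx,dy)\le 2(1-k)$, since $y-x=(1-k)(T_2(y')-T_1(x'))$ on the image of $\Theta^{(k)}$) short-circuits all of this: it gives the explicit bound $\bigl|\int g\,d\rho^{(k)}-\int g(x,x)\,\mu(dx)\bigr|\le\omega_g(2(1-k))$, hence convergence of the whole family with a rate and no compactness argument, and it makes the $\mu$-a.e.\ statement about the disintegrations essentially immediate (once $2(1-k)<\varepsilon$ one even has $\rho^{(k)}(\{|y-x|>\varepsilon\}\,|\,x)=0$ for $\mu$-a.e.\ $x$, so your Markov-plus-diagonal argument can be simplified further). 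This is a cleaner treatment of the conditional laws than the paper's, which passes from convergence of the joint characteristic functions to $\bar\rho(dy|x)=\delta_x(dy)$ somewhat implicitly. For the weak-coupling limit $k\downarrow0$ your argument (subsequential limits, invariance under $\Theta=T_1\times T_2$ via the uniform bound $\|\Theta^{(k)}-\Theta\|_\infty\le k(\|T_1\|_\infty+\|T_2\|_\infty)$ and continuity, then identification by uniqueness) is the same as the paper's, merely phrased with test functions instead of characteristic functions.

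One caveat, which applies equally to the paper's own proof and which you should not regard as resolved: the last step of the $k\downarrow0$ argument asserts that a $\Theta$-invariant probability measure with marginals $\mu$ and $\bar\nu$ must equal $\mu\otimes\bar\nu$. As stated this is false in general: a joining of the two systems need not be the product (if $T_1=T_2$ is uniquely ergodic with non-atomic invariant measure $\mu$, the diagonal measure is $T_1\times T_2$-invariant with both marginals $\mu$ yet is not $\mu\otimes\mu$). Both your argument and the paper's therefore only establish that every subsequential limit is a joining of $(T_1,\mu)$ and $(T_2,\bar\nu)$; upgrading this to the product requires either a disjointness-type hypothesis or additional information extracted from the structure of $\rho^{(k)}$ itself.
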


\begin{proof}
We follow the same strategy of Proposition 6.2 in \cite{CGSV}.

Let us consider the characteristic function
\begin{equation}
\mathbb{R}^{2}\ni\left(  t_{1},t_{2}\right)  \longmapsto\varphi_{\left(
\eta_{1}^{\left(  k\right)  },\eta_{2}^{\left(  k\right)  }\right)  }\left(
t_{1},t_{2}\right)  :=\mathbb{E}\left[  e^{i\left\langle \left(  t_{1}%
,t_{2}\right)  ,\left(  \eta_{1}^{\left(  k\right)  },\eta_{2}^{\left(
k\right)  }\right)  \right\rangle }\right]  \in\mathbb{C}%
\end{equation}
of the random vector $\left(  \eta_{1}^{\left(  k\right)  },\eta_{2}^{\left(
k\right)  }\right)  .$ Since $\rho^{\left(  k\right)  }$ is invariant for
$\Theta^{\left(  k\right)  }=\left(  T_{1},\psi_{\cdot}^{\left(  k\right)
}\right)  ,$ i.e. $\Theta_{\ast}^{\left(  k\right)  }\rho^{\left(  k\right)
}=\rho^{\left(  k\right)  },$%
\begin{align}
\varphi_{\left(  \eta_{1}^{\left(  k\right)  },\eta_{2}^{\left(  k\right)
}\right)  }\left(  t_{1},t_{2}\right)   &  =\int_{I^{2}}\rho^{\left(
k\right)  }\left(  dx,dy\right)  \exp\left\{  i\left(  t_{1}x+t_{2}y\right)
\right\} \\
&  =\int_{I^{2}}\left(  \Theta_{\ast}^{\left(  k\right)  }\rho^{\left(
k\right)  }\right)  \left(  dx,dy\right)  \exp\left\{  i\left(  t_{1}%
x+t_{2}y\right)  \right\} \nonumber\\
&  =\int_{I^{2}}\rho^{\left(  k\right)  }\left(  dx,dy\right)  \exp\left\{
i\left(  t_{1}T_{1}\left(  x\right)  +t_{2}\psi_{x}^{\left(  k\right)
}\left(  y\right)  \right)  \right\} \nonumber\\
&  =\int_{I^{2}}\rho^{\left(  k\right)  }\left(  dx,dy\right)  e^{it_{1}%
T_{1}\left(  x\right)  }e^{it_{2}\left[  kT_{1}\left(  x\right)  +\left(
1-k\right)  T_{2}\left(  y\right)  \right]  }\nonumber\\
&  =\int_{I}\mu\left(  dx\right)  e^{i\left(  t_{1}+t_{2}k\right)
T_{1}\left(  x\right)  }\int_{I}\rho^{\left(  k\right)  }\left(  dy|x\right)
e^{it_{2}\left(  1-k\right)  T_{2}\left(  y\right)  }\ .\nonumber
\end{align}
Since $T_{2}\left(  \eta_{2}^{\left(  k\right)  }\right)  \in I,$ for any
$k\in\left[  0,1\right]  ,(1-k)T_{2}\left(  \eta_{2}^{\left(  k\right)
}\right)  \underset{k\uparrow1}{\longrightarrow}0$ a.s.. Hence, by the
conditional dominated convergence theorem,
\begin{equation}
\lim_{k\longrightarrow1}\mathbb{E}\left.  \left[  e^{i\left(  1-k\right)
t_{2}T_{2}\left(  \eta_{2}^{\left(  k\right)  }\right)  }\right\vert \eta
_{1}\right]  =\mathbb{E}\left.  \left[  \lim_{k\longrightarrow1}e^{i\left(
1-k\right)  t_{2}T_{2}\left(  \eta_{2}^{\left(  k\right)  }\right)
}\right\vert \eta_{1}\right]  =1\ ,\;\forall t_{2}\in\mathbb{R}\ . \label{cdc}%
\end{equation}
Furthermore, the family of measures $\{\rho^{(k)},k\in\lbrack0,1]\}$ on
$\left(  I^{2},\mathcal{B}\left(  I^{2}\right)  \right)  $ is tight, hence
sequentially compact; therefore, given a sequence $\{\rho^{(k_{n})}\}_{n\geq
1}\subset\{\rho^{(k)},k\in\lbrack0,1]\},$ where $\{k_{n}\}_{n\geq1}\uparrow1,$
let us consider $\{\rho^{(k_{n_{l}})}\}_{l\geq1},$ with $\{k_{n_{l}}%
\}_{l\geq1}\subset\{k_{n}\}_{n\geq1},$ a weakly convergent subsequence of
$\{\rho^{(k_{n})}\}_{n\geq1}.$ Denoting by $\bar{\rho}$ the weak limit of
$\{\rho^{(k_{n_{l}})}\}_{l\geq1}$ and by $\bar{\rho}\left(  dy|\cdot\right)  $
the conditional probability distribution such that, for any bounded $\bar
{\rho}$-measurable function $f$ on $I^{2},$%
\begin{equation}
\int_{I^{2}}\bar{\rho}\left(  dx,dy\right)  f\left(  x,y\right)  =\int_{I}%
\mu\left(  dx\right)  \int_{I}\bar{\rho}\left(  dy|x\right)  f\left(
x,y\right)  \ ,
\end{equation}
for any $\left(  t_{1},t_{2}\right)  \in\mathbb{R}^{2},$%
\begin{gather}
\int_{I^{2}}\bar{\rho}\left(  dx,dy\right)  \exp\left\{  i\left(  t_{1}%
x+t_{2}y\right)  \right\}  =\int_{I}\mu\left(  dx\right)  \int_{I}\bar{\rho
}\left(  dy|x\right)  e^{i\left(  t_{1}x+t_{2}y\right)  }\\
=\lim_{l\rightarrow\infty}\varphi_{\left(  \eta_{1}^{\left(  k_{n_{l}}\right)
},\eta_{2}^{\left(  k_{n_{l}}\right)  }\right)  }\left(  t_{1},t_{2}\right)
\nonumber\\
=\lim_{l\rightarrow\infty}\mathbb{E}\left[  e^{i\left(  t_{1}+k_{n_{l}}%
t_{2}\right)  T_{1}\left(  \eta_{1}\right)  }\mathbb{E}\left.  \left[
e^{it_{2}\left(  1-k_{n_{l}}\right)  T_{2}\left(  \eta_{2}^{\left(  k_{n_{l}%
}\right)  }\right)  }\right\vert \eta_{1}\right]  \right] \nonumber\\
=\mathbb{E}\left[  \lim_{l\rightarrow\infty}\left(  e^{i\left(  t_{1}%
+k_{n_{l}}t_{2}\right)  T_{1}\left(  \eta_{1}\right)  }\mathbb{E}\left.
\left[  e^{it_{2}\left(  1-k_{n_{l}}\right)  T_{2}\left(  \eta_{2}^{\left(
k_{n_{l}}\right)  }\right)  }\right\vert \eta_{1}\right]  \right)  \right]
\nonumber\\
=\mathbb{E}\left[  \lim_{l\rightarrow\infty}e^{i\left(  t_{1}+k_{n_{l}}%
t_{2}\right)  T_{1}\left(  \eta_{1}\right)  }\lim_{l\rightarrow\infty
}\mathbb{E}\left.  \left[  e^{it_{2}\left(  1-k_{n_{l}}\right)  T_{2}\left(
\eta_{2}^{\left(  k_{n_{l}}\right)  }\right)  }\right\vert \eta_{1}\right]
\right] \nonumber\\
=\mathbb{E}\left[  \lim_{l\rightarrow\infty}e^{i\left(  t_{1}+k_{n_{l}}%
t_{2}\right)  T_{1}\left(  \eta_{1}\right)  }\mathbb{E}\left.  \left[
\lim_{l\rightarrow\infty}e^{it_{2}\left(  1-k_{n_{l}}\right)  T_{2}\left(
\eta_{2}^{\left(  k_{n_{l}}\right)  }\right)  }\right\vert \eta_{1}\right]
\right] \nonumber\\
=\int_{I}\mu\left(  dx\right)  e^{i\left(  t_{1}+t_{2}\right)  T_{1}\left(
x\right)  }=\int_{I}\mu\left(  dx\right)  e^{i\left(  t_{1}+t_{2}\right)
x}\ ,\nonumber
\end{gather}
where the fourth line follows from the third by the Lebesgue's dominated
convergence theorem, the sixth line follows from the fifth by (\ref{cdc}),
while the last line follows by the invariance of $\mu$ w.r.t. the evolution
defined by $T_{1}.$ Hence, $\bar{\rho}\left(  dy|x\right)  =dy\delta\left(
y-x\right)  ,\mu$-a.s. and, since this result holds for any weakly convergent
subsequence $\{\rho^{(k_{n_{l}})}\}_{l\geq1}$ of any sequence $\{\rho
^{(k_{n})}\}_{n\geq1}$ in $\{\rho^{(k)},k\in\lbrack0,1]\},$ we get the thesis.

Proceeding as in the strong coupling limit ($k\rightarrow1$), by the tightness
of $\{\rho^{(k)},k\in\lbrack0,1]\},$ given a sequence $\{\rho^{(k_{n}%
)}\}_{n\geq1}\subset\{\rho^{(k)},k\in\lbrack0,1]\},$ where $\{k_{n}\}_{n\geq
1}\downarrow0,$ let $\{\rho^{(k_{n_{l}})}\}_{l\geq1},$ with $\{k_{n_{l}%
}\}_{l\geq1}\subset\{k_{n}\}_{n\geq1},$ be a weakly convergent subsequence of
$\{\rho^{(k_{n})}\}_{n\geq1}$ converging to $\bar{\rho}.$ Then, denoting by
$\Theta$ the direct-product map
\begin{equation}
I^{2}\ni\left(  x,y\right)  \longmapsto\Theta\left(  x,y\right)  :=\left(
T_{1}\left(  x\right)  ,T_{2}\left(  y\right)  \right)  \in I^{2}\ ,
\end{equation}
$\forall\left(  t_{1},t_{2}\right)  \in\mathbb{R},$ we have
\begin{gather}
\int_{I^{2}}\bar{\rho}\left(  dx,dy\right)  \exp\left\{  i\left(  t_{1}%
x+t_{2}y\right)  \right\}  =\lim_{l\rightarrow\infty}\int_{I^{2}}\rho^{\left(
k_{n_{l}}\right)  }\left(  dx,dy\right)  \exp\left\{  i\left(  t_{1}%
x+t_{2}y\right)  \right\} \\
=\lim_{l\rightarrow\infty}\int_{I^{2}}\left(  \Theta_{\ast}^{\left(  k_{n_{l}%
}\right)  }\rho^{\left(  k_{n_{l}}\right)  }\right)  \left(  dx,dy\right)
\exp\left\{  i\left(  t_{1}x+t_{2}y\right)  \right\} \nonumber\\
=\lim_{l\rightarrow\infty}\int_{I^{2}}\rho^{\left(  k_{n_{l}}\right)  }\left(
dx,dy\right)  e^{i\left(  t_{1}+k_{n_{l}}t_{2}\right)  T_{1}\left(  x\right)
+it_{2}\left(  1-k_{n_{l}}\right)  T_{2}\left(  y\right)  }\nonumber\\
=\lim_{l\rightarrow\infty}\int_{I}\mu\left(  dx\right)  e^{i\left(
t_{1}+k_{n_{l}}t_{2}\right)  T_{1}\left(  x\right)  }\int_{I}\rho^{\left(
k\right)  }\left(  dy|x\right)  e^{it_{2}\left(  1-k_{n_{l}}\right)
T_{2}\left(  y\right)  }\nonumber\\
=\lim_{l\rightarrow\infty}\mathbb{E}\left[  e^{i\left(  t_{1}+k_{n_{l}}%
t_{2}\right)  T_{1}\left(  \eta_{1}\right)  }\mathbb{E}\left.  \left[
e^{it_{2}\left(  1-k_{n_{l}}\right)  T_{2}\left(  \eta_{2}^{\left(  k_{n_{l}%
}\right)  }\right)  }\right\vert \eta_{1}\right]  \right] \nonumber\\
=\int_{I^{2}}\bar{\rho}\left(  dx,dy\right)  \exp\left\{  i\left(  t_{1}%
T_{1}\left(  x\right)  +t_{2}T_{2}\left(  y\right)  \right)  \right\}
\nonumber\\
=\int_{I^{2}}\left(  \Theta_{\ast}\bar{\rho}\right)  \left(  dx,dy\right)
\exp\left\{  i\left(  t_{1}x+t_{2}y\right)  \right\}  \ ,\nonumber
\end{gather}
that is the invariance of $\bar{\rho}$ for the evolution defined by $\Theta.$
In particular, if $\left(  \eta_{1},\eta_{2}\right)  $ is a random vector with
law $\bar{\rho},$ by setting $t_{2}=0$ in the r.h.s. of the last expression we
get that the marginal of $\eta_{1}$ is invariant under the evolution defined
by $T_{1}$ and therefore must be equal to $\mu,$ while, setting $t_{1}=0,$ we
get that $\bar{\nu},$ the marginal of $\eta_{2},$ is invariant under the
evolution defined by $T_{2}.$ Then, if $\bar{\nu}$ is the unique invariant
measure for the evolution defined by $T_{2},\mu\otimes\bar{\nu}$ is the unique
measure left invariant by $\Theta.$
\end{proof}

This result also proves that the answer to Question III is negative.

First notice that if $\mu$ is the unique invariant measure for $T_{1},$
denoting by $\left(  \zeta_{1}^{\left(  n\right)  }\left(  k\right)
,\zeta_{2}^{\left(  n\right)  }\left(  k\right)  \right)  $ the random vector
with law $\rho_{n}^{\left(  k\right)  },\nu_{n}^{\left(  k\right)  }$ is the
maginal w.r.t. $\zeta_{2}^{\left(  n\right)  }\left(  k\right)  .$ Hence,
$\forall k\in\left(  0,1\right)  ,$ any limit in distribution of $\left(
\zeta_{1}^{\left(  n\right)  }\left(  k\right)  ,\zeta_{2}^{\left(  n\right)
}\left(  k\right)  \right)  $ will not be made by a random vector with
independent components even in the limit as $k\uparrow1$ since, as shown in
Proposition \ref{wl}, for any $\varphi\in C\left(  I^{2}\right)
,\lim_{k\rightarrow1}\rho^{\left(  k\right)  }\left(  \varphi\right)
=\int_{I}\mu\left(  dx\right)  \varphi\left(  x,x\right)  .$ On the other
hand, by Proposition \ref{Unique2} and Theorem 2.1.7 in \cite{Ar}, for any
$k\in\left(  0,1\right)  ,\left\{  \bar{\rho}_{n}^{\left(  k\right)
}\right\}  _{n\geq1}$ weakly converges to $\mathbb{P}\otimes\bar{\nu}^{\left(
k\right)  },$ so that $\forall\varphi\in C\left(  I^{2}\right)  ,\left\{
\bar{\rho}_{n}^{\left(  k\right)  }\left(  \varphi\right)  \right\}  _{n\geq
0}$ converges to $\mu\otimes\bar{\nu}^{\left(  k\right)  }\left(
\varphi\right)  ,$ while, as $k$ tends to $1,$ by Proposition 6.2 in
\cite{CGSV}, this measure weakly converges to $\mu\otimes\mu.$

On the other hand, denoting by $f^{\left(  k\right)  }$ be the density of
$\nu_{k}^{\ast}.$ Then,
\begin{align}
\left\Vert g^{\left(  k\right)  }\lambda-f^{\left(  k\right)  }\lambda
\right\Vert _{TV}  &  =\mathbb{E}\left[  \left\vert g^{\left(  k\right)
}-f^{\left(  k\right)  }\right\vert \right]  =\mathbb{E}\left[  \mathbb{E}%
\left[  \left\vert g^{\left(  k\right)  }-f^{\left(  k\right)  }\right\vert
|\eta_{1}\right]  \right] \\
&  \geq\mathbb{E}\left[  \left\vert \mathbb{E}\left[  g^{\left(  k\right)
}-f^{\left(  k\right)  }|\eta_{1}\right]  \right\vert \right]  \ .\nonumber
\end{align}

Since the processes $\left\{  Y_{i}^{\left(  k\right)  }\right\}  _{i\geq0}$
and $\left\{  x_{i}\right\}  _{i\geq0}$ are idependent, $\mathbb{E}\left[
g^{\left(  k\right)  }-f^{\left(  k\right)  }|\eta_{1}\right]  =g^{\left(
k\right)  }-\mathbb{E}\left[  f^{\left(  k\right)  }|\eta_{1}\right]
,\lambda$-a.s., so that
\begin{align}
\left\Vert g^{\left(  k\right)  }\lambda-f^{\left(  k\right)  }\lambda
\right\Vert _{TV}  &  \geq\mathbb{E}\left[  \left\vert g^{\left(  k\right)
}-\mathbb{E}\left[  f^{\left(  k\right)  }|\eta_{1}\right]  \right\vert
\right]  =\left\Vert g^{\left(  k\right)  }-\rho^{\left(  k\right)  }\left(
\cdot|\eta_{1}\right)  \right\Vert _{TV}\\
&  \geq\sup_{\phi\in C\left(  I\right)  \ :\ \left\Vert \phi\right\Vert
_{\infty}\leq1}\left\vert \int_{I}dxg^{\left(  k\right)  }\left(  x\right)
\phi\left(  x\right)  -\int_{I}\rho^{\left(  k\right)  }\left(  dx|\eta
_{1}\right)  \phi\left(  x\right)  \right\vert \ .\nonumber
\end{align}
But, for any $\phi\in C\left(  I\right)  ,$ by from Proposition 6.2 in
\cite{CGSV} and Proposition \ref{wl}
\begin{equation}
\lim\sup_{k\rightarrow1}\left\vert \int_{I}dxg^{\left(  k\right)  }\left(
x\right)  \phi\left(  x\right)  -\int_{I}\rho^{\left(  k\right)  }\left(
dx|\eta_{1}\right)  \phi\left(  x\right)  \right\vert =\left\vert \mu\left(
\phi\right)  -\phi\left(  \eta_{1}\right)  \right\vert \ ,
\end{equation}
which is always positive unless $\phi$ is a constant function.

\section{Conclusions}

In this note we conclude the analysis carried on in \cite{CGSV} about the very
nature of the so called topological synchronisazion phenomenon occurring,
according to the physics literature (\cite{Letal} and referece therein), in
asymmetrically coupled chaotic dynamical systems in the strong coupling
regime, in the paradigmatic case of two logistic type maps coupled in a master
slave configuration.

In particular, we prove that, in the limit as the coupling constant $k$ tends
to zero, any invariant measure of the coupled system $\rho^{\left(  k\right)
}$ weakly converges to the product measure $\mu_{1}\otimes\mu_{2}$ where
$\mu_{1}$ is an invariant measures for the map $T_{1}$ defining the evolution
of the master component and $\mu_{2}$ is an invariant measures for the map
$T_{2}$ appearing in the definition of the evolution of the slave component
given in (\ref{msd}). On the other hand, since $T_{1}$ and $T_{2}$ are both
continuous, the coupled system has an invariant measure $\rho^{\left(
k\right)  }$ admiting the disintegration
\begin{equation}
\int_{I^{2}}\rho^{\left(  k\right)  }\left(  dx,dy\right)  f(x,y)=\int_{I}%
\mu_{1}(dx)\int_{I}\rho^{\left(  k\right)  }(dy|x)f(x,y)\ ,
\end{equation}
where $f$ is a $\rho^{\left(  k\right)  }$-integrable function, and, in the
limit as $k\uparrow1,$ we can prove that the probability kernel $\mathcal{B}%
\left(  I\right)  \times I\ni\left(  A,x\right)  \longmapsto\rho^{\left(
k\right)  }\left(  A|x\right)  \in\left[  0,1\right]  $ weakly converges to a
random measure concentrated on a r.v. distributed according to the law
$\mu_{1}.$ In other words, in the limit $k\uparrow1$ the joint law
$\rho^{\left(  k\right)  }$ of the system's components concentrates on the
diagonal of $I^{2}.$

Moreover, looking at the RDS where the evolution of the slave component of the
coupled system is obtained by substituting in the driving term the evolution
of the master's component with a sequence of i.i.d. random variables sampled
according to the master's component invariant measure $\mu_{1},$ we prove
that, if $T_{2}$ is continuous and $T_{2}\left(  I\right)  \subset I,$ e.g.has
the form (\ref{T_2}), and $\mu_{1}$ has density, the RDS just defined has a
unique invariant measure and the rate of convergence in $L^{1}\left(
\lambda\right)  $ of the associated MC to its unique stationary distribution
is geometric. Furthermore, if $\mu_{1}=h\lambda,$ where $h>\psi$ for some
$\psi\in B_{1}^{+},$ which is the case e.g. if the r.v. with law $\mu_{1}$ is
stochastically dominated by an a.c. r.v., the just mentioned geometric rate of
convergence admits an explicit bound for $k\in\left(  0,k_{\ast}\left(
\mu,T_{2}\right)  \right)  $ with $k_{\ast}\left(  h,T_{2}\right)  \in(0,1].$

\section{Aknowledgements}

MG is partially supported by INDAM-GNAMPA and thanks Sandro Vaienti for useful
discussions. We are also grateful to the referees for the careful reading of
the manuscript and for their comments which helped us to improve the
presentation and the results of the paper.

\end{document}